\renewcommand{\le}{\varleq}
\renewcommand{\ge}{\vargeq}
\newcommand\mL{L\kern-0.08cm\char39}
\newcommand{\myand}{\text{ and }}
\newcommand{\seb}{\{\,}
\newcommand{\sen}{\,\}}
\newcommand{\getsby}[1]{\xleftarrow{#1}}
\newcommand{\funcdecomp}[2]{{#1\/}^{\/#2}}
\newcommand{\tesgh}{edge-surjective graph homomorphism}
\newcommand{\pdirectional}{\raise0.05em\hbox{$+$}directional}
\newcommand{\bidirectional}{bidirectional}
\newcommand{\Z}{\mathbb{Z}}
\newcommand{\Nonne}{\mathbb{N}}
\newcommand{\Posint}{\Nonne^+}
\newcommand{\beposint}{\in \Posint}
\newcommand{\bpi}{\beposint} 
\newcommand{\benonne}{\in \Nonne}
\newcommand{\bni}{\in \Nonne} 
\newcommand{\diam}{{\rm diam}}
\newcommand{\pstrzinf}[1]{(#1_0,#1_1,#1_2,\dotsc)}
\newcommand{\Decomp}{\mathscr{D}}
\newcommand{\Gcal}{\mathcal{G}}
\newcommand{\Ucal}{\mathcal{U}}
\newcommand{\kuu}{\emptyset}
\newcommand{\nekuu}{\neq \kuu}
\newcommand{\fai}{\varphi}
\newcommand{\enumb}{\begin{enumerate}}
\newcommand{\enumn}{\end{enumerate}}
\newcommand{\itemb}{\begin{itemize}}
\newcommand{\itemn}{\end{itemize}}
\newtheorem{thm}{Theorem}[section]
\newtheorem{lem}[thm]{Lemma}
\theoremstyle{definition}
\newtheorem{defn}[thm]{Definition}
\theoremstyle{remark}
\newtheorem{nota}[thm]{Notation}
\numberwithin{equation}{section}
\newcommand{\remn}{{\rm remn}}
\newcommand{\gap}{{\rm gap}}
\newcommand{\pinfty}{+\infty}
\begin{document}

\title[Construction of a completely scrambled system by graph covers]
{The construction of a completely scrambled system\\ by graph covers}

\author{TAKASHI SHIMOMURA}

\address{Nagoya University of Economics, Uchikubo 61-1, Inuyama 484-8504, Japan}
\curraddr{}
\email{tkshimo@nagoya-ku.ac.jp}
\thanks{}

\subjclass[2010]{Primary 37B05, 54H20.}

\keywords{completely scrambled, graph covers, 0-dimensional}

\date{\today}

\dedicatory{}

\commby{}

\begin{abstract}
In this paper, we define a new construction of completely scrambled 0-dimensional systems using the inverse limit of sequences of directed graph covers.
These examples are transitive and are not locally equicontinuous.
\end{abstract}

\maketitle
\section{Introduction}
Let $X$ be a compact metrizable space,
 and $f : X \to X$ be a continuous surjective map.
In this paper, we call $(X,f)$ a topological dynamical system,
 and consider the case in which $X$ is 0-dimensional.
In this case, we call $(X,f)$ a 0-dimensional system.
If $X$ is homeomorphic to the Cantor set, $(X,f)$ is called
 a Cantor system.
Akin, Glasner and Weiss \cite{AGW}
 made use of a special sequence of directed graph covers
 to construct a special homeomorphism
 that has the generic conjugacy class in the space of all Cantor systems,
 while Gambaudo and Martens \cite{GM}
 employed special sequences of directed graph covers
 to study ergodic measures of Cantor minimal systems.
In \cite{Shimomura4},
 we generalized this latter construction to arbitrary 0-dimensional systems.
In this paper, we use a sequence of graph covers to construct
 examples that are transitive, completely scrambled, and not locally equicontinuous.
A subset $S \subseteq X$ is called a {\it scrambled set} if, for every $x \ne y \in S$,
\[ \limsup_{n \to \pinfty}d(f^n(x),f^n(y)) > 0\]
and
\[ \liminf_{n \to \pinfty}d(f^n(x),f^n(y)) = 0. \]
Since Li and Yorke  developed the notion of scrambled sets
 in the study of chaotic systems \cite{LY},
 there has been some discussion as to how large such sets can be.
In 1997, Mai reported a non-compact example that is completely scrambled \cite{Mai},
 i.e., the scrambled set is the whole space,
 and conjectured that there was no compact example.
Huang and Ye \cite{HY} later disputed this conjecture.
They constructed a compact, 0-dimensional completely scrambled system.
By taking the product of
 the identity map with any other compact set, and collapsing some subspaces to a point,
 their example indicated the existence of others on a variety of spaces.
In the same paper, they also announced the existence of a transitive example.
In 2000, Glasner and Weiss \cite{GW3}
 introduced the notion of {\it local equicontinuity}.
Let $(X,f)$ be a homeomorphism on a compact metric space.
This is said to be {\it locally equicontinuous} if every $x \in X$ is an 
 equicontinuity point on the orbit closure of $x$ itself.
Blanchard and Huang \cite{BH} announced the existence of
 a number of examples that are transitive, locally equicontinuous, and
 completely scrambled.
In this paper, we construct another set of examples that are 0-dimensional,
 transitive, and completely scrambled, but not locally equicontinuous.
We shall make use of the inverse limit of sequences of graph covers.

\section{Preliminaries}
In this section, we repeat the construction of graph covers for 0-dimensional systems originally given in \S 3 of \cite{Shimomura4}.
We also describe some notation for later use.
A pair $G = (V,E)$ consisting of a finite set $V$ and a relation $E \subseteq V \times V$ on $V$ can be considered as a directed graph with vertices $V$ and an edge from $u$ to $v$ when $(u,v) \in E$.
We assume that $G$ is edge surjective, i.e., for every vertex $v \in V$ there exist edges $(u_1,v),(v,u_2) \in E$.
Let $G_i = (V_i,E_i)$ with $i = 1,2$ be directed graphs.
A map $\fai : V_1 \to V_2$ is said to be a graph homomorphism
 if every edge is mapped to an edge; we describe this as $\fai : G_1 \to G_2$.
Suppose that a graph homomorphism $\fai : G_1 \to G_2$ satisfies the following condition:
\[(u,v),(u,v') \in E_1 \text{ implies that } \fai(v) = \fai(v').\]
In this case, $\fai$ is said to be {\it \pdirectional}.
Suppose that a graph homomorphism $\fai$ satisfies both of the following conditions:
\[(u,v),(u,v') \in E_1 \text{ implies that } \fai(v) = \fai(v') \myand \]
\[(u,v),(u',v) \in E_1 \text{ implies that } \fai(u) = \fai(u').\]
Then, $\fai$ is said to be {\it \bidirectional}.\\

%
%
\begin{defn}\label{defn:cover}
A graph homomorphism $\fai : G_1 \to G_2$ is called a {\it cover}\/ if it is a \pdirectional\ \tesgh.
\end{defn}
Let $\Gcal$ be a sequence $G_1 \getsby{\fai_1} G_2 \getsby{\fai_2} \dotsb$ of graph homomorphisms.
\begin{nota}
For $m > n$, let $\fai_{m,n} := \fai_{n} \circ \fai_{n+1} \circ \dotsb \circ \fai_{m-1}$.
\end{nota}
Then, $\fai_{m,n}$ is a graph homomorphism.
If all ${\fai_i}$ $(i \beposint)$ are edge surjective, then every $\fai_{m,n}$ is edge surjective.
Similarly, if all ${\fai_i}$ $(i \beposint)$ are covers, every $\fai_{m,n}$ is a cover.
%
%
Let us write $G_i = (V_i,E_i)$ for $i \benonne$.
Define
\[V_{\Gcal} := \seb (x_0,x_1,x_2,\dotsc) \in \prod_{i = 0}^{\infty}V_i~|~x_i = \fai_i(x_{i+1}) \text{ for all } i \benonne \sen \text{ and}\]
\[E_{\Gcal} := \seb (x,y) \in V_{\Gcal} \times V_{\Gcal}~|~(x_i,y_i) \in E_i \text{ for all } i \benonne\sen,\]
each equipped with the product topology.
\begin{nota}
For each $n \benonne$, the projection from $V_{\Gcal}$ to $V_n$ is denoted by $\fai_{\infty,n}$. 
\end{nota}
\begin{nota}\label{nota:decomp}
Let $X$ be a compact metrizable 0-dimensional space.
A finite partition of $X$ by non-empty clopen sets is called a {\it decomposition}.
The set of all decompositions of $X$ is denoted by $\Decomp(X)$.
Each $\Ucal \in \Decomp(X)$ is endowed with the discrete topology.
\end{nota}
\begin{nota}
Let $f : X \to X$ be a continuous surjective mapping from a compact metrizable 0-dimensional space $X$ onto itself.
Let $\Ucal$ be a decomposition of $X$.
Then, a map $\kappa_{\Ucal} : X \to \Ucal$ is defined as $\kappa_{\Ucal}(x) = U \in \Ucal$ if $x \in U \in \Ucal$.
A surjective relation $\funcdecomp{f}{\Ucal}$ on $\Ucal$ is defined as:
\[ \funcdecomp{f}{\Ucal} := \seb (u,v) ~|~ f(u) \cap v \nekuu \sen. \]
If the surjective map $f : X \to X$ is regarded as a subset $f \subseteq X \times X$ and $\Ucal$ is a decomposition of $X$, then $\funcdecomp{f}{\Ucal} = (\kappa_{\Ucal} \times \kappa_{\Ucal})(f)$.
In general, $(\Ucal,\funcdecomp{f}{\Ucal})$ is a graph, because $f$ is a surjective relation.
\end{nota}
%
%
We can state the following:
\begin{lem}\label{lem:invlimto0}
Let $\Gcal$ be a sequence $G_0 \getsby{\fai_0} G_1 \getsby{\fai_1} G_2 \getsby{\fai_2} \dotsb$ of covers.
Then, $V_{\Gcal}$ is a compact metrizable 0-dimensional space, and the relation $E_{\Gcal}$ determines a continuous  mapping from $V_{\Gcal}$ onto itself.
In addition, if the sequence is \bidirectional, then the relation $E_{\Gcal}$ determines a homeomorphism.
\end{lem}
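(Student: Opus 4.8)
The plan is to treat the three assertions in turn, reducing everything to the finite combinatorics of the bonding maps $\fai_i \colon G_{i+1} \to G_i$ together with one compactness argument in the inverse limit. For the topological assertion I would note that $\prod_{i \ge 0} V_i$, being a countable product of finite discrete spaces, is compact, metrizable and $0$-dimensional, and that $V_{\Gcal}$ is the subset cut out by the conditions $x_i = \fai_i(x_{i+1})$, each of which is clopen; hence $V_{\Gcal}$ is a closed, therefore compact, subspace inheriting all three properties. Nonemptiness follows because each cover is edge surjective and every $G_i$ is edge surjective, so each $\fai_i$ is vertex surjective, and an inverse limit of nonempty finite sets along surjective bonding maps is nonempty.

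For the second assertion the key point is that the $+$directional condition forces $E_{\Gcal}$ to be single-valued, while edge surjectivity forces it to be total and onto. Given $x \in V_{\Gcal}$, I would define $y$ by $y_i := \fai_i(w)$ for any out-neighbour $w$ of $x_{i+1}$ in $G_{i+1}$ (one exists by edge surjectivity of $G_{i+1}$); the condition $(x_{i+1},w),(x_{i+1},w') \in E_{i+1} \Rightarrow \fai_i(w)=\fai_i(w')$ makes $y_i$ independent of the choice, so $y_i$ is a genuine function of $x_{i+1}$ alone. A short check using that $\fai_i$ is a graph homomorphism then gives $(x_i,y_i) \in E_i$ and $y_i = \fai_i(y_{i+1})$, so $y \in V_{\Gcal}$ and $(x,y) \in E_{\Gcal}$; applying the same directional condition to any two candidates $y,y'$ gives uniqueness. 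Thus $E_{\Gcal}$ is the graph of a map $f$, and since each coordinate $y_i$ depends only on $x_{i+1}$ through a fixed map of finite sets, $f$ is continuous.

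Surjectivity of $f$ is where I expect the real work, and here compactness re-enters. The crucial preliminary observation is monotonicity: applying the homomorphism $\fai_{i-1}$ to an edge shows $(x_i,y_i) \in E_i \Rightarrow (x_{i-1},y_{i-1}) \in E_{i-1}$. Hence, fixing a target $y \in V_{\Gcal}$, the clopen sets $B_n := \seb x \in V_{\Gcal} \mid (x_n,y_n) \in E_n \sen$ are nested, $B_0 \supseteq B_1 \supseteq \dotsb$. Each $B_n$ is nonempty: $y_n$ has an in-neighbour in $G_n$ by edge surjectivity, and that vertex extends to an element of $V_{\Gcal}$ by the same inverse-limit nonemptiness used above, the bonding maps being vertex surjective. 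A nested sequence of nonempty closed subsets of the compact space $V_{\Gcal}$ has nonempty intersection, and any $x \in \bigcap_n B_n$ satisfies $(x,y) \in E_{\Gcal}$, i.e.\ $f(x)=y$.

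For the final assertion I would show that the \bidirectional\ hypothesis makes $f$ injective. If $f(x)=f(x')=y$, then for every $i$ both $(x_{i+1},y_{i+1})$ and $(x'_{i+1},y_{i+1})$ lie in $E_{i+1}$, and the second directional condition $(u,v),(u',v) \in E_{i+1} \Rightarrow \fai_i(u)=\fai_i(u')$ yields $x_i = \fai_i(x_{i+1}) = \fai_i(x'_{i+1}) = x'_i$ for all $i$, whence $x=x'$. Being a continuous bijection from a compact space to a Hausdorff space, $f$ is then a homeomorphism. The main obstacle is the surjectivity step, which needs the nested-compact-sets argument; the remaining parts are a careful bookkeeping of the two directional conditions, and the principal care there is to verify that the coordinatewise recipe for $y$ is simultaneously well defined, compatible with the inverse-limit constraints, and continuous.
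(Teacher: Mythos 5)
Your proof is correct and complete. Note that the paper itself gives no argument here---its ``proof'' is a one-line citation to Lemma 3.5 of the reference \cite{Shimomura4}---so what you have written is a genuine self-contained proof of the standard kind, and every step checks out: $V_{\Gcal}$ is a closed subspace of a product of finite discrete spaces, nonempty because edge surjectivity of the homomorphisms together with edge surjectivity of each graph $G_i$ forces each $\fai_i$ to be vertex surjective; the \pdirectional\ condition makes the out-neighbour recipe $y_i := \fai_i(w)$, $(x_{i+1},w)\in E_{i+1}$, well defined, and the compatibility $y_i = \fai_i(y_{i+1})$ follows exactly as you indicate, since $\fai_{i+1}$ carries out-neighbours of $x_{i+2}$ to out-neighbours of $x_{i+1}$, so the same well-definedness applies; your nested clopen sets $B_n$ handle surjectivity correctly (the monotonicity $B_{n+1}\subseteq B_n$ is just the homomorphism property, nonemptiness uses surjectivity of the projections $\fai_{\infty,n}$, and compactness finishes); and in the \bidirectional\ case the dual condition gives injectivity coordinatewise, after which compactness upgrades the continuous bijection to a homeomorphism. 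The only point worth flagging is that you correctly identified surjectivity as the step that genuinely needs compactness rather than pure bookkeeping---a cheaper-looking attempt to build a preimage coordinate by coordinate would founder on the need to make the choices of in-neighbours coherent across levels, which is precisely what your intersection argument circumvents.
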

\begin{proof}
See Lemma 3.5 of \cite{Shimomura4}.
\end{proof}
%
%
Let $\Gcal$ be a sequence $G_0 \getsby{\fai_0} G_1 \getsby{\fai_1} G_2 \getsby{\fai_2} \dotsb$ of covers.
Then, by the above lemma, $E_{\Gcal}$ defines a continuous surjective mapping from $V_{\Gcal}$ onto itself.
The 0-dimensional system $(V_{\Gcal},E_{\Gcal})$ is called the inverse limit of $\Gcal$, and is denoted by $G_{\infty}$.
We write $G_{\infty} = (V_{\Gcal},E_{\Gcal}) = (X,f)$.
%
%
\begin{nota}\label{nota:graph=covering-graph}
Let $\Gcal$ be a sequence $G_0 \getsby{\fai_0} G_1 \getsby{\fai_1} G_2 \getsby{\fai_2} \dotsb$ of covers.
Let $G_i = (V_i, E_i)$ for $i \benonne$.
For each $i \benonne$, we define:
\[\Ucal_{i} := \seb \fai_{\infty,i}^{-1}(u)~|~u \in V_i \sen,\]
which we can identify with $V_i$ itself.
\end{nota}
%
%
From \cite{Shimomura4}, we have the following:
%
%
\begin{thm}\label{thm:structure}
A topological dynamical system is 0-dimensional if and only if it is topologically conjugate to $G_{\infty}$ for some sequence of covers $G_0 \getsby{\fai_0} G_1 \getsby{\fai_1} G_2 \getsby{\fai_2} \dotsb$.
In addition, if all of the covers are bidirectional, then the resulting 0-dimensional system is a homeomorphism.
\end{thm}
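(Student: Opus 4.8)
The plan is to treat the two implications separately and to read off the final clause from Lemma \ref{lem:invlimto0}. The easy direction is immediate: if $(X,f)$ is conjugate to $G_\infty = (V_\Gcal, E_\Gcal)$ for a sequence of covers, then Lemma \ref{lem:invlimto0} already guarantees that $V_\Gcal$ is compact metrizable and 0-dimensional and that $E_\Gcal$ is a continuous surjection, so $(X,f)$ is a 0-dimensional system. For the converse, let $(X,f)$ be 0-dimensional. I would build a nested sequence of decompositions $\Ucal_0 = \seb X \sen, \Ucal_1, \Ucal_2, \dotsc$ with $\mesh(\Ucal_i) \to 0$, set $G_i := (\Ucal_i, \funcdecomp{f}{\Ucal_i})$, and let $\fai_i : G_{i+1} \to G_i$ be the refinement map carrying a cell of $\Ucal_{i+1}$ to the cell of $\Ucal_i$ containing it.

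For any refinement $\Ucal_{i+1}$ of $\Ucal_i$ the map $\fai_i$ is automatically a homomorphism and is edge surjective: if $(u,v) \in \funcdecomp{f}{\Ucal_{i+1}}$ with $u \subseteq U$, $v \subseteq V$, then $f(U) \supseteq f(u)$ meets $V$, so $(U,V) \in \funcdecomp{f}{\Ucal_i}$; conversely an edge $(U,V)$ of $G_i$ comes from some $x \in U$ with $f(x) \in V$, whose $\Ucal_{i+1}$-cells give a preimage edge (here surjectivity of $f$ also gives edge surjectivity of each $G_i$ itself). The genuine constraint is the \pdirectional\ condition, which holds precisely when $f(u)$ lies in a single cell of $\Ucal_i$ for each $u \in \Ucal_{i+1}$.

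The main obstacle is thus to refine $\Ucal_i$ so as to secure this property together with a mesh bound, and I would do so by a compactness argument. For each $x$ the point $f(x)$ lies in a cell $V \in \Ucal_i$; since $V$ is clopen and $X$ is 0-dimensional, choose a clopen $W_x \ni x$ with $W_x \subseteq f^{-1}(V)$ and $\diam(W_x)$ as small as desired, so $f(W_x) \subseteq V$. Finitely many $W_x$ cover $X$, and the decomposition $\Ucal_{i+1}$ generated by them and by $\Ucal_i$ then refines $\Ucal_i$, has small mesh (each of its cells sits in some $W_x$), and maps each cell into one cell of $\Ucal_i$ under $f$. Hence every $\fai_i$ is a cover.

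It remains to exhibit the conjugacy $\Phi : X \to V_\Gcal$, $\Phi(x) = (\kappa_{\Ucal_0}(x), \kappa_{\Ucal_1}(x), \dotsc)$. The identity $\kappa_{\Ucal_i}(x) = \fai_i(\kappa_{\Ucal_{i+1}}(x))$ places $\Phi$ in $V_\Gcal$, and $\Phi$ is continuous since each coordinate is locally constant. Injectivity follows from $\mesh(\Ucal_i) \to 0$ and surjectivity from the fact that a point of $V_\Gcal$ is a nested sequence of nonempty clopen sets of vanishing diameter, meeting in a single point; so $\Phi$ is a continuous bijection of compact Hausdorff spaces, hence a homeomorphism. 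Since $f(x) \in f(\kappa_{\Ucal_i}(x)) \cap \kappa_{\Ucal_i}(f(x))$ for every $i$, we get $(\Phi(x), \Phi(f(x))) \in E_\Gcal$, and as $E_\Gcal$ determines a map (Lemma \ref{lem:invlimto0}) this yields $E_\Gcal(\Phi(x)) = \Phi(f(x))$; thus $\Phi$ is a topological conjugacy. Finally, the added clause is exactly the last assertion of Lemma \ref{lem:invlimto0}: bidirectionality of all $\fai_i$ makes $E_\Gcal$ a homeomorphism, so $G_\infty$ is a homeomorphism.
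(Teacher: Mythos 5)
Your proof is correct. Note, however, that the paper itself offers no argument for this theorem: it is quoted from \cite{Shimomura4} (as is Lemma \ref{lem:invlimto0}), so there is no in-paper proof to match against. What you have written is essentially a self-contained reconstruction of the construction in that reference: one builds a refining sequence of decompositions $\Ucal_0 = \seb X \sen, \Ucal_1, \Ucal_2, \dotsc$ with $\mesh(\Ucal_i) \to 0$ such that $f$ maps each cell of $\Ucal_{i+1}$ into a single cell of $\Ucal_i$ (which is exactly the \pdirectional\ condition for the refinement map $\fai_i : (\Ucal_{i+1}, \funcdecomp{f}{\Ucal_{i+1}}) \to (\Ucal_i, \funcdecomp{f}{\Ucal_i})$), and then checks that $\Phi(x) = (\kappa_{\Ucal_0}(x), \kappa_{\Ucal_1}(x), \dotsc)$ is a conjugacy onto $G_\infty$; your verification of edge surjectivity, injectivity via the mesh condition, surjectivity via nested clopen sets, and the equivariance $E_{\Gcal}(\Phi(x)) = \Phi(f(x))$ using the map-determining property from Lemma \ref{lem:invlimto0} are all sound, as is reading the homeomorphism clause off the bidirectional case of that lemma. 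One small streamlining: your compactness argument with the clopen sets $W_x$ can be replaced by observing that $f^{-1}(\Ucal_i)$ is itself a clopen decomposition (preimages of clopen sets under a continuous map are clopen and partition $X$), so one may simply take $\Ucal_{i+1}$ to be the common refinement of $\Ucal_i$, $f^{-1}(\Ucal_i)$, and any decomposition of mesh less than $2^{-i}$; this yields the \pdirectional\ property immediately. Also note that the theorem's final clause, as stated, only asserts one direction (bidirectional covers yield a homeomorphism), which your appeal to Lemma \ref{lem:invlimto0} covers; you are not obliged to show that every 0-dimensional homeomorphism admits a bidirectional cover sequence, and you correctly did not claim to.
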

%
\noindent We now give some notation that will be used later in the paper.
\itemb
\item[(N-1)]  We write $G_{\infty} = (X,f)$,
\item[(N-2)]  we fix a metric $d$ on $X$,
\item[(N-3)]  for each $i \bni$, we write $G_i = (V_i,E_i)$,
\item[(N-4)]  for each $i \bni$, we define $U(v) := \fai^{-1}_{\infty,i}(v)$ for $v \in V_i$ and $\Ucal_i := \seb U(v) ~|~v \in V_i\sen \in \Decomp(X)$, and 
\item[(N-5)]  for each $i \bni$, there exists a bijective map $V_i \ni v \leftrightarrow U(v) \in \Ucal_i$.
By this bijection, we obtain a graph isomorphism $G_i \cong (\Ucal_i,\funcdecomp{f}{\Ucal_i})$.
\itemn
Let $G = (V,E)$ be a surjective directed graph.
A sequence of vertices $(v_0,v_1,\dotsc,v_l)$ of $G$ is said to be a {\it walk}\/ of length $l$ if $(v_i, v_{i+1}) \in E$ for all $0 \le i < l$.
We denote  $l(w) := l$ and $V(w) := \seb v_0,v_1,\dotsc,v_l \sen$.
We say that a walk $w = (v_0,v_1,\dotsc,v_l)$ is a {\it path}\/
 if $v_i$ $(0 \le i \le l)$ are mutually distinct.
A walk $c = (v_0,v_1,\dotsc,v_l)$ is said to be a {\it cycle}\/ of period $l$
 if $v_0 = v_l$,
 and a cycle $c = (v_0,v_1,\dotsc,v_l)$
 is a {\it circuit} of period $l$ if the $v_i$ $(0 \le i < l)$ are mutually distinct.
Let $w_1 = (u_0,u_1,\dotsc, u_l)$ and $w_2 = (v_0,v_1,\dotsc, v_{l'})$ be walks
 such that $u_l = v_0$.
Then, we denote $w_1 + w_2 := (u_0,u_1,\dots,u_l,v_1,v_2,\dotsc,v_{l'})$.
Note that $l(w_1 + w_2) = l+l'$.
\section{Construction}
Let $G_0$ be a singleton graph
 with only one vertex $v_{0,0}$ and one edge $(v_{0,0},v_{0,0})$.
We shall construct a sequence of graph covers $G_0 \getsby{\fai_0} G_1 \getsby{\fai_1} G_2 \getsby{\fai_2} \dotsb$
 such that every $G_n$ $(n \ge 1)$ is a generalized figure-8 with a special vertex $v_{n,0} \in V_n$
 and a special edge $e_{n,0} = (v_{n,0},v_{n,0}) \in E_n$ for each $n \bni$.
We assume that $\fai_n(v_{n+1,0}) = v_{n,0}$ for each $n \bni$. 
 Thus, $\fai_n(e_{n+1,0}) = e_{n,0}$ for each $n \bni$.
With this setting, we construct a class of examples.
We assume that, for each $n \ge 1$,
 $G_n$ consists of circuits
 $\seb c_{n,1}, c_{n,2}, \dotsc, c_{n,n},e_{n,0} \sen$ such that
 for each $1 \le i < j \le n$, $V(c_{n,i}) \cap V(c_{n,j}) = \seb v_{n,0} \sen$.
We write $l(n,l) := l(c_{n,l})$ and $c_{n,l} =
 (v_{n,l,0} = v_{n,0},v_{n,l,1},v_{n,l,2},\dotsc, v_{n,l,l(n,l)} = v_{n,0})$.
Let us construct a cover.
We assume that $\fai_n(V(c_{n+1,n+1})) = v_{n,0}$ for each $n \bni$,
and that, for each $1 \le i \le n$,
 $\fai_n(c_{n+1,i}) = e_{n,0} + 2 c_{n,i}+2 c_{n,i+1} + \dotsb + 2c_{n,n} + e_{n,0}$.
These are bidirectional covers,
 and the resulting continuous surjection $(X,f)$ is a homeomorphism.
The length of each $c_{n+1,i}$ with $1 \le i \le n$ is determined by the length of
 each $c_{n,j}$ with $1 \le j \le n$,
 and we can take $l(c_{n+1,n+1}) > 1$ arbitrarily.
If we include  $l(c_{n+1,n+1}) = 1$, we cannot distinguish 
 $c_{n+1,n+1}$ with $e_{n+1,0}$.
Therefore, we avoid this case.
As stated in the previous section,
 $G_{\infty}$ is written as $(X,f)$, and we now use the notation described earlier.
It is clear that $X$ has no isolated points.
Thus, $(X,f)$ is a Cantor system.
\begin{thm}\label{thm:main}
The Cantor system $(X,f)$ is completely scrambled,
 topologically transitive, 
 and is not locally equicontinuous.
\end{thm}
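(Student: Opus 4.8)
Throughout, write $p := (v_{0,0},v_{1,0},v_{2,0},\dotsc)\in X$ for the fixed point carried by the special self-loops $e_{n,0}$. Since the properties in question are invariant under change of compatible metric, I may assume the metric $d$ of (N-2) satisfies $d(x,y)\le 2^{-k}$ exactly when $x$ and $y$ lie in a common member of $\Ucal_k$, i.e.\ when $\fai_{\infty,j}(x)=\fai_{\infty,j}(y)$ for all $j\le k$. I treat the three assertions in turn.

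\emph{Proximality ($\liminf=0$ for every pair).} The plan is to show $\{p\}$ is the unique minimal subset of $(X,f)$ and transfer this to $X\times X$. First I would note that in each $G_k$ every vertex other than $v_{k,0}$ is an interior vertex of exactly one circuit $c_{k,i}$ and so has out-degree $1$; following the forced out-edges one reaches $v_{k,0}$ within $\max_i l(k,i)$ steps. Hence for every $x$ the coordinate $\fai_{\infty,k}(f^n x)$ equals $v_{k,0}$ at least once in each window of that length, so infinitely often, and diagonalizing over $k$ gives $n_j$ with $f^{n_j}x\to p$. Thus $p$ lies in every orbit closure, so any minimal set, being the orbit closure of each of its points, contains the invariant point $p$ and therefore equals $\{p\}$. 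Applying the two coordinate factor maps $X\times X\to X$, every minimal subset of $(X\times X,f\times f)$ projects to $\{p\}$ in each factor and so equals $\{(p,p)\}\subseteq\Delta$. As $\overline{O(x,y)}$ always contains a minimal set, $(p,p)\in\overline{O(x,y)}$ for all $x,y$, yielding $n_j$ with $d(f^{n_j}x,f^{n_j}y)\to 0$; hence $\liminf_{n\to\pinfty}d(f^nx,f^ny)=0$ for every pair.

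\emph{No nontrivial asymptotic pairs ($\limsup>0$).} This is where the real work lies, and I expect it to be the main obstacle. Suppose $x\ne y$ and let $k$ be least with $\fai_{\infty,k}(x)\ne\fai_{\infty,k}(y)$; it suffices to exhibit one level $\ell$ on which the two orbits disagree infinitely often, since then $\limsup_{n\to\pinfty}d(f^nx,f^ny)\ge 2^{-\ell}>0$. The structural facts I would exploit are: (i) in $G_k$ two distinct forward orbits can coincide only by reaching the unique vertex $v_{k,0}$ of in-degree $>1$ simultaneously; (ii) the covers are \pdirectional, so a forward orbit at level $k+1$ is the unique lift of its level-$k$ projection once its value at one time is fixed; and (iii) the prescribed lengths satisfy $l(k,1)>l(k,2)>\dotsb>l(k,k-1)$, hence are pairwise distinct, while each $\fai_{k-1}$ traverses every lower circuit \emph{twice}. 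Assuming $d(f^nx,f^ny)\to 0$, the orbits must eventually agree at every level, and by (i)--(ii) this forces at each level $k+j$ a \emph{simultaneous} merge of two phase-shifted lifts at the center $v_{k+j,0}$. I would then use (iii) and the doubling to show that two lifts sitting over a common base walk at different phases return to their center at different times, so simultaneous merges cannot persist through all levels; tracing the obstruction upward gives $f^Nx=f^Ny$ for some $N$, whence $x=y$. Making this phase-and-length bookkeeping airtight is the crux of the theorem.

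\emph{Transitivity.} Since $X$ is perfect and the $U(v)$ form a basis, it suffices to find, for all $v,v'\in V_n$, some $m\ge 0$ and $z\in X$ with $\fai_{\infty,n}(z)=v$ and $\fai_{\infty,n}(f^m z)=v'$. I would first observe that each $G_n$ is strongly connected (every circuit meets $v_{n,0}$, from which every circuit is accessible), so a walk $v\to v'$ exists in $G_n$; because each $\fai_i$ is an edge-surjective \pdirectional\ cover, this walk lifts successively through $G_{n+1},G_{n+2},\dotsc$, and the compatible sequence so obtained defines $z$ (completed to a full orbit by Lemma~\ref{lem:invlimto0}). Thus $f^m(U(v))\cap U(v')\ne\emptyset$, giving topological transitivity and, in particular, a transitive point $x^{\ast}$ with $\overline{O(x^{\ast})}=X$.

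\emph{Failure of local equicontinuity.} The plan is to show $(X,f)$ is sensitive with constant $2^{-1}$ and then apply the transitive point. Since $l(1,1)>1$, the graph $G_1$ has at least two vertices, so separation at level $1$ is genuine. Given any $x$ and any $N$, I would fix a large time $T$ and build, by the lifting procedure above, a point $y$ with $\fai_{\infty,j}(y)=\fai_{\infty,j}(x)$ for all $j\le N$ (so $d(x,y)\le 2^{-N}$) whose level-$(N+1)$ forward orbit is routed so that $\fai_{\infty,1}(f^Ty)=v_{1,0}$ iff $\fai_{\infty,1}(f^Tx)\ne v_{1,0}$: the centered case is arranged by steering $y$ into the interior of the collapsing circuit $c_{N+1,N+1}$ at time $T$, which freezes all levels $\le N$ at their centers, and the off-center case by steering it around a genuine circuit. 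Either way $d(f^Tx,f^Ty)\ge 2^{-1}$ with $y$ arbitrarily near $x$, so no point is an equicontinuity point. In particular $x^{\ast}$ is not an equicontinuity point of $\overline{O(x^{\ast})}=X$, so $(X,f)$ is not locally equicontinuous; this is exactly what separates the example from the locally (almost) equicontinuous completely scrambled systems of Blanchard and Huang.
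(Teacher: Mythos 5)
Your proposal is correct and genuinely slicker than the paper on exactly one quarter of the theorem: the $\liminf$ half. Showing that $\{p\}$ is the unique minimal set (via the forced out-degree-one walks reaching $v_{k,0}$ within $\max_i l(k,i)$ steps) and then pushing this through the product $(X\times X, f\times f)$ to get $(p,p)$ in every orbit closure is a clean soft argument that replaces the paper's hands-on timing estimate (Lemma \ref{lem:xy-proximal}). But the rest has genuine gaps, and they sit precisely where the paper does its real work. For the $\limsup>0$ half you yourself concede the bookkeeping is unfinished, and the sketch as written would not close: your structural fact (ii) is false --- the collapsing circuit $c_{k+1,k+1}$ satisfies $\fai_k(V(c_{k+1,k+1}))=v_{k,0}$, and every out-edge of $v_{k+1,0}$ projects to the loop $e_{k,0}$, so the constant-center level-$k$ itinerary has many distinct lifts and forward itineraries are \emph{not} uniquely lifted. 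Worse, your endgame ``tracing the obstruction upward gives $f^Nx=f^Ny$, whence $x=y$'' cannot handle pairs on the same orbit, $y=f^d(x)$ with $d\ne 0$: for those no such $N$ exists, and the paper must instead argue by recurrence (Lemma \ref{lem:on-the-same-orbit-positive}: a limit point $z\ne p$ of the orbit, whose existence needs the winding estimate of Lemma \ref{lem:x-p-limsup-positive}, would satisfy $f^d(z)=z$, contradicting Lemma \ref{lem:no-periodic-orbits}). Nothing in your sketch reconstructs the degree stabilization, the gap calculus of Lemmas \ref{lem:gap-calculation}--\ref{lem:gap-between-CnCn}, or the case split on $\deg x$ versus $\deg y$ (equal, differing by one, differing by at least two), and the hardest case ($\deg y=\deg x+1$) rests on the occurrence-pattern Lemmas \ref{lem:usual-gap-occurrence} and \ref{lem:tail-gap-occurrence}, for which you offer no substitute.

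Two further steps would fail as stated. In transitivity, arbitrary walks do \emph{not} lift through these covers: the walk $c_{n,3}+c_{n,1}$ in $G_n$ (circuit $3$ followed immediately by circuit $1$) is the projection of no walk in $G_{n+1}$, because projections of upstairs walks only realize transitions $c_{n,i}\to c_{n,i}$, $c_{n,i}\to c_{n,i+1}$, or transitions through $e_{n,0}$; edge-surjectivity lifts single edges, not walks from a prescribed start. The paper avoids this by explicitly building the transitive point along the circuits $c_{m,1}$, using that $\fai_n(c_{n+1,1})$ passes every vertex of $G_n$ --- your argument needs that replacement. Finally, your non-local-equicontinuity plan overstates the available freedom: a point is chosen once and its whole itinerary at every level is then rigid, so you cannot ``steer $y$ into $c_{N+1,N+1}$ at time $T$''; worse, for the transitive point (which is where you need sensitivity) all coordinates lie on the circuits $c_{m,1}$, which forces every sufficiently nearby $y$ to have all higher coordinates on $c_{m,1}$ as well, so entry into collapsing circuits is unavailable and the centered/off-centered windows of $y$ are dictated by the fixed circuit lengths through the choice of lifts. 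Showing those windows de-synchronize from the level-$1$ itinerary of $x$ is exactly the non-periodicity of gap occurrences, which is how the paper's Lemma \ref{lem:does-not-have-equicontinuity-points} proceeds, using points $y_n=f^{i_n}(x)$ on the same orbit together with Lemmas \ref{lem:usual-gap-occurrence} and \ref{lem:tail-gap-occurrence}. So the theorem's conclusion stands, and your $\liminf$ argument is a nice improvement, but the three hard claims remain unproven on your route.
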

\begin{nota}
We denote $p := (v_{1,0},v_{2,0},v_{3,0},\dotsc) \in X$,
 and every $e_{n,0}$ ($n \bni$) is simply described  as $e$ if there no possibility of confusion.
\end{nota}
It is obvious that $p$ is a fixed point.
From the construction of the covers, the next lemma follows easily:
\begin{lem}
For $m > n$ and $1 \le l \le l' \le n$, it follows that $l(m,l)>l(n,l')$.
\end{lem}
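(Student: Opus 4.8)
The plan is to reduce everything to the explicit length recurrence forced by the cover definition, and then run a short induction on $m$. First I would record that recurrence. Since $\fai_n$ is a graph homomorphism it sends each edge of a walk to an edge, so it preserves walk length; hence $l(c_{n+1,i}) = l(\fai_n(c_{n+1,i}))$. The prescribed image is $\fai_n(c_{n+1,i}) = e_{n,0} + 2c_{n,i} + 2c_{n,i+1} + \dotsb + 2c_{n,n} + e_{n,0}$, so using $l(w_1 + w_2) = l(w_1) + l(w_2)$ together with $l(e_{n,0}) = 1$ and $l(2c_{n,j}) = 2\,l(n,j)$ gives, for every $1 \le i \le n$,
\[ l(n+1,i) = 2 + 2\sum_{j=i}^{n} l(n,j). \]

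From this single identity I would extract two elementary consequences. Keeping only the $j = i$ term in the sum shows the within-level monotonicity $l(n+1,k) \ge 2 + 2\,l(n,k) > l(n,k)$ for every $1 \le k \le n$; chaining this inequality from level $n$ up to level $m-1$ then yields $l(m-1,l') \ge l(n,l')$ whenever $m - 1 \ge n \ge l'$. Keeping instead only the $j = l'$ term, which is legitimate precisely because $l \le l' \le n$ places $l'$ inside the summation range, shows $l(n+1,l) \ge 2 + 2\,l(n,l')$; applying the very same estimate one level higher gives $l(m,l) \ge 2 + 2\,l(m-1,l')$, valid since $l \le l' \le n \le m-1$.

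Combining the two estimates finishes the argument: for $m > n$ and $1 \le l \le l' \le n$,
\[ l(m,l) \ge 2 + 2\,l(m-1,l') \ge 2 + 2\,l(n,l') > l(n,l'). \]

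The argument is essentially bookkeeping, so I do not expect a genuine obstacle; the only point requiring care will be index control. One must verify at each invocation that $l'$ actually lies in the relevant summation range $\seb l, l+1, \dotsc, m-1 \sen$, which is exactly what the hypotheses $l \le l'$ and $l' \le n < m$ guarantee, and one must retain the degenerate base case $m-1 = n$ of the monotonicity chain, where that inequality becomes an equality rather than a strict one. The final strict inequality needs nothing beyond $2 + l(n,l') > 0$, so no small cases require separate treatment.
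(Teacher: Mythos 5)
Your proof is correct: the length recurrence $l(n+1,i) = 2 + 2\sum_{j=i}^{n} l(n,j)$ is exactly what the cover definition forces, your index bookkeeping (ensuring $l' \in \seb l,\dotsc,m-1 \sen$ at each invocation) is sound, and the final chain $l(m,l) \ge 2 + 2\,l(n,l') > l(n,l')$ is valid. The paper offers no explicit proof, asserting only that the lemma ``follows easily'' from the construction, and your argument is precisely the routine computation it leaves implicit.
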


\begin{lem}\label{lem:no-periodic-orbits}
We have that $p \in X$ is the only fixed point, and
 this is the only periodic point.
\end{lem}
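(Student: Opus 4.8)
The plan is to reduce both assertions to the single statement that $p$ is the only periodic point, since a fixed point is just a periodic point of period $1$. I would nonetheless first dispose of the fixed-point case directly, as it is instructive and quick. If $f(x) = x$ then $(x_n,x_n) \in E_n$ for every $n \bni$, so each coordinate $x_n$ carries a self-loop in $G_n$. Because every circuit $c_{n,l}$ has length $l(n,l) \ge 2$, the only edge of the form $(v,v)$ in $G_n$ is $e_{n,0} = (v_{n,0},v_{n,0})$; hence $x_n = v_{n,0}$ for all $n$, that is, $x = p$.

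For the general periodic case, suppose toward a contradiction that $x \ne p$ has period $q$, and write $O = \seb x, f(x), \dotsc, f^{q-1}(x) \sen$ for its (finite) orbit. The first observation I would record is that the set $\seb n \bni \mid x_n \ne v_{n,0} \sen$ is upward closed: since $x_n = \fai_n(x_{n+1})$ and $\fai_n(v_{n+1,0}) = v_{n,0}$, we have $x_{n+1} = v_{n+1,0} \Rightarrow x_n = v_{n,0}$. As $x \ne p$, this set is nonempty, hence equals $\seb n \mid n \ge N \sen$ for some $N \beposint$. The geometric heart of the argument is then the cut-vertex structure of $G_n$: for $n \ge N$ the coordinate $x_n$ is an interior vertex $v_{n,l,k}$ (with $0 < k < l(n,l)$) of a unique circuit, and each interior vertex has a unique outgoing edge. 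Since $(f^t(x)_n, f^{t+1}(x)_n) \in E_n$ for all $t$, the level-$n$ trajectory $(x_n, f(x)_n, f^2(x)_n, \dotsc)$ is forced to run along $c_{n,l}$ and reaches $v_{n,0}$ after exactly $l(n,l) - k$ steps; by periodicity the orbit point $f^{(l(n,l)-k) \bmod q}(x)$ then lies in the clopen set $U(v_{n,0}) = \fai_{\infty,n}^{-1}(v_{n,0})$. Thus $O \cap U(v_{n,0}) \noemp$ for every $n \ge N$.

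The contradiction comes from letting $n \to \pinfty$. The sets $U(v_{n,0})$ are clopen and, by the same implication $x_{n+1} = v_{n+1,0} \Rightarrow x_n = v_{n,0}$, decreasing in $n$, with $\bigcap_{n} U(v_{n,0}) = \seb p \sen$. Since $O$ is finite and $p \notin O$ (if $f^t(x) = p$ then, $p$ being fixed, $x = f^q(x) = p$, a contradiction), each $o \in O$ eventually leaves $U(v_{n,0})$; taking the maximum over the finitely many $o \in O$ produces an $n_0$ with $O \cap U(v_{n,0}) = \kuu$ for all $n \ge n_0$. Choosing $n \ge \max(N,n_0)$ contradicts the previous paragraph, so no periodic point other than $p$ exists; in particular $p$ is the only fixed point.

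I expect the main obstacle to be pinning down the \emph{forced walk} step cleanly, namely that from an interior vertex the level-$n$ trajectory is deterministic and genuinely returns to $v_{n,0}$ within one period. This rests entirely on the facts that consecutive images stay edge-related in $E_n$ and that interior vertices of the circuits have unique successors; once that is in hand, the remainder is the soft topology of the nested clopen neighborhoods $U(v_{n,0})$ collapsing onto the fixed point $p$.
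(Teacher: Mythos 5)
Your proof is correct, but it takes a genuinely different route from the paper's. The paper disposes of the statement in two lines: a periodic point other than $p$ would force, for all large $n$, a circuit $c_n$ of $G_n$ of the same fixed period on which the bonding maps $\fai_n$ restrict to isomorphisms, which is incompatible with the construction, since each $\fai_n$ wraps a circuit around the long walk $e + 2c_{n,i} + \dotsb + 2c_{n,n} + e$ and circuit lengths strictly grow. Your argument never invokes the growth of the lengths $l(n,l)$ at all: you use only the local structure of the figure-8 graphs --- the self-loop at $v_{n,0}$ is the unique loop (legitimate, since the convention $l(c_{n,n})>1$ and the distinctness of circuit vertices rule out any other edge $(v,v)$), interior vertices $v_{n,l,k}$ with $0<k<l(n,l)$ have a unique outgoing edge because distinct circuits meet only in $v_{n,0}$, so the level-$n$ trajectory is forced deterministically back to $v_{n,0}$, whence after reduction modulo the period $O \cap U(v_{n,0}) \noemp$ for all $n \ge N$ --- combined with the soft topological fact that the clopen sets $U(v_{n,0})$ decrease (via $x_{n+1}=v_{n+1,0} \Rightarrow x_n=v_{n,0}$, which also gives your upward-closedness) with $\bigcap_n U(v_{n,0}) = \seb p \sen$, while the finite orbit $O$ avoids $p$. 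What the paper's approach buys is brevity and a formulation that locates the obstruction purely in the combinatorics of the covers (no circuit of bounded period survives the bonding maps); what yours buys is completeness and elementarity: the paper's intermediate claim that the $\fai_n$ become isomorphisms of circuits of constant period is asserted rather than proved, whereas your forced-walk step and the nested-clopen-set argument are fully justified from the definitions, and you also verify the fixed-point case explicitly instead of calling it obvious.
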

\begin{proof}
The first statement is obvious.
Suppose that there exists a periodic point other than $p$.
Then, there exists some $N > 0$ such that, for all $n \ge N$,
 there exists a circuit $c_n$ of $G_n$ of the same period,
 and the $\fai_n$ are isomorphisms of these circuits.
This contradicts the construction of this graph cover.
\end{proof}
In this section, we show that $(X,f)$ is completely scrambled
 by stating successive lemmas.
Broadly, we show, in the following order, that:
\[
\begin{array}{l}
\liminf_{i \to \pinfty}d(f^k(x),f^k(p)) = 0 \text{ for } x \ne p,\\
\limsup_{i \to \pinfty}d(f^k(x),f^k(p)) > 0 \text{ for } x \ne p,\\
\limsup_{i \to \pinfty}d(f^k(x),f^k(y)) = 0
 \text{ for } x \ne y \in X\setminus \seb p \sen, \text{ and}\\
\limsup_{i \to \pinfty}d(f^k(x),f^k(y)) > 0
 \text{ for } x \ne y \in X\setminus \seb p \sen \text{ in separate cases}.
\end{array}
\]
\begin{lem}\label{lem:proximal-to-p}
For each $x \in X$,
 it follows that $\liminf_{k \to \pinfty}d(f^k(x),p) = 0$.
\end{lem}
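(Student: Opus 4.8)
The plan is to translate the metric assertion into a statement purely about coordinates, and then to exploit the rigidity of the circuit structure of each $G_n$. By \usualnotations, a point $z \in X$ is a sequence $(z_0,z_1,z_2,\dotsc)$ with $z_i \in V_i$ and $z_i = \fai_i(z_{i+1})$, so that the content of the lemma is to control how often $f^k(x)$ agrees with $p$ on an initial block of coordinates.

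First I would carry out the reduction. Since $\fai_m(v_{m+1,0}) = v_{m,0}$ for all $m$, the condition $z_n = v_{n,0}$ forces $z_m = v_{m,0}$ for every $m \le n$; hence the clopen set $U(v_{n,0}) = \fai_{\infty,n}^{-1}(v_{n,0}) = \seb z \in X ~|~ z_n = v_{n,0} \sen$ is precisely the set of points agreeing with $p$ in all coordinates up to index $n$. As $n \to \pinfty$ these clopen sets form a neighbourhood basis at $p$ and $\mesh \Ucal_n \to 0$, so it suffices to prove the combinatorial claim that for every $n \bni$ the set $\seb k \benonne ~|~ (f^k(x))_n = v_{n,0} \sen$ is infinite.

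Next I would establish this claim from the structure of $G_n$. Put $L_n := \max_{1 \le i \le n} l(n,i) < \pinfty$. The key observation is that every vertex of $G_n$ other than $v_{n,0}$ is an interior vertex of exactly one circuit $c_{n,i}$ — because distinct circuits meet only at $v_{n,0}$ and the interior vertices of a single circuit are mutually distinct — and such a vertex $v_{n,i,s}$ (with $1 \le s < l(n,i)$) has a \emph{unique} outgoing edge in $E_n$, namely to $v_{n,i,s+1}$. Since $f$ is the map determined by $E_{\Gcal}$ (Lemma \ref{lem:invlimto0}), whenever $(f^k(x))_n = v_{n,i,s}$ is interior the value $(f^{k+1}(x))_n = v_{n,i,s+1}$ is forced, independently of the other coordinates. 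Thus the $n$-th coordinate advances deterministically along its circuit and reaches $v_{n,0}$ after at most $l(n,i) - s \le L_n$ applications of $f$; so from any $k$ there is $0 \le j \le L_n$ with $(f^{k+j}(x))_n = v_{n,0}$, and the return set has gaps bounded by $L_n$ and is in particular infinite. Combined with the reduction, this yields $\liminf_{k \to \pinfty} d(f^k(x),p) = 0$.

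The only point I expect to require genuine care is the forcing step: one must verify that the uniqueness of the outgoing edge at an interior vertex of $G_n$ really does pin down the $n$-th coordinate of $f^{k+1}(x)$ regardless of the behaviour at all higher levels. This is exactly where the generalized figure-$8$ hypothesis — that distinct circuits share only the special vertex $v_{n,0}$ — is indispensable, since it is what makes every interior vertex have out-degree one; without it the coordinate could branch and the bounded-return argument would fail.
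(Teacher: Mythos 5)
Your proposal is correct and follows essentially the same route as the paper: the paper's proof also projects the orbit to a walk in $G_n$, notes that this walk must pass $v_{n,0}$ infinitely often, and concludes via the shrinking cylinder sets $U(v_{n,0})$. You have merely filled in the details the paper declares ``clear,'' namely that the generalized figure-$8$ structure gives every interior vertex out-degree one, forcing a return to $v_{n,0}$ within a bounded number of steps.
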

\begin{proof}
For any $n \bni$, it follows that the sequence $\fai_{\infty,n}(f^k(x))$ $(k > 0)$ follows a walk
 of $G_n$.
It is clear that this walk passes $v_{n,0}$ infinitely many times.
Thus, the conclusion is evident.
\end{proof}
\if0 
\begin{nota}
For $x = (v_0,v_1,v_2,\dotsc) \in X$ and $k \in \Z$,
we denote $f^k(x) = (v_0^k,v_1^k,v_2^k,\dotsc)$.
\end{nota}
\fi
\begin{nota}
For $v_{n,i,j} \in V_n$ with $0 < j < l(n,i)$, we denote 
 $\remn(v) := l(n,i) - j$.
For an $x \in U(v) \subset X$, $\remn(v)$ steps remain
 until we reach $U(v_{n,0})$,
 i.e., $f^{i}(x) \notin U(v_{n,0})$ for $0 \le i < \remn(v)$
 and $f^{\remn(v)}(x) \in U(v_{n,0})$.
\end{nota}
\begin{nota}
For $v \in V_n$, we denote the {\it degree}\/ of $v$ as follows:
\[ \deg(v) =
\begin{cases}
\pinfty & \text{ if } v = v_{n,0}\\
i & \text{ if } v \in V(c_{n,i})\setminus \seb v_{n,0} \sen
\end{cases}
\]
\end{nota}
\begin{lem}\label{lem:degree-non-increasing}
Let $x = (v_0,v_1,v_2,\dotsc) \in X$.
For $n < n'$, it follows that $\deg v_n \ge \deg v_{n'} \ge 1$.
\end{lem}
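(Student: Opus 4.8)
The plan is to reduce to the single-step inequality $\deg v_n \ge \deg v_{n+1}$ and then iterate. Since, by definition, every vertex has degree equal either to a positive integer or to $\pinfty$, the lower bound $\deg v_{n'} \ge 1$ is automatic, and all the content lies in showing that the degree does not increase with the index. Because $x = (v_0,v_1,v_2,\dotsc)$ satisfies $v_n = \fai_n(v_{n+1})$ for every $n$, it will suffice to prove that $\deg \fai_n(w) \ge \deg w$ for each vertex $w \in V_{n+1}$, and then to chain these bounds from $n$ up to $n'$.

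For the single-step inequality I would split into cases according to the position of $w = v_{n+1}$ in the generalized figure-8 $G_{n+1}$; note first that each vertex other than $v_{n+1,0}$ lies on a unique circuit, since $V(c_{n+1,i}) \cap V(c_{n+1,j}) = \seb v_{n+1,0} \sen$ for $i \ne j$, so its degree is well defined. If $w = v_{n+1,0}$, then $\fai_n(w) = v_{n,0}$ and both degrees equal $\pinfty$. If $w$ lies on the newly attached circuit $c_{n+1,n+1}$ but is not $v_{n+1,0}$, then $\fai_n(V(c_{n+1,n+1})) = v_{n,0}$ gives $\deg \fai_n(w) = \pinfty \ge n+1 = \deg w$. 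The decisive case is when $w$ lies on $c_{n+1,i} \setminus \seb v_{n+1,0} \sen$ for some $1 \le i \le n$, so that $\deg w = i$; here the defining relation $\fai_n(c_{n+1,i}) = e_{n,0} + 2c_{n,i} + 2c_{n,i+1} + \dotsb + 2c_{n,n} + e_{n,0}$ shows that $\fai_n(w)$ is one of the vertices traversed by this walk.

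The crux is the observation that this walk visits only the loop at $v_{n,0}$ and the circuits $c_{n,j}$ with $j \ge i$; consequently $\fai_n(w)$ is either $v_{n,0}$, of degree $\pinfty$, or an interior vertex of some $c_{n,j}$ with $j \ge i$, of degree $j \ge i$. In both subcases $\deg \fai_n(w) \ge i = \deg w$, which finishes the single-step inequality. Transitivity of $\ge$, together with the convention that $\pinfty \ge m$ for every positive integer $m$, then yields $\deg v_n \ge \deg v_{n+1} \ge \dotsb \ge \deg v_{n'} \ge 1$ for any $n < n'$. I do not expect a genuine obstacle: the only points needing care are the well-definedness of the degree (handled by the disjointness of the circuits away from $v_{n+1,0}$) and the bookkeeping that the image walk $e_{n,0} + 2c_{n,i} + \dotsb + 2c_{n,n} + e_{n,0}$ never descends below degree $i$.
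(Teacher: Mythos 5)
Your proof is correct and follows exactly the route the paper intends: the paper's own proof is simply ``By the construction of $\fai_n$, the proof is evident,'' and your single-step case analysis (fixed vertex, the collapsed circuit $c_{n+1,n+1}$, and the image walk $e_{n,0} + 2c_{n,i} + \dotsb + 2c_{n,n} + e_{n,0}$ never descending below degree $i$) is precisely the construction-based argument being invoked, spelled out in full. Nothing is missing; the well-definedness remark and the chaining step are both sound.
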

\begin{proof}
By the construction of $\fai_n$ $(n \bni)$, the proof is evident.
\end{proof}
\begin{nota}\label{nota:degree-of-each-x}
Let $x = (v_0,v_1,v_2,\dotsc) \in X$.
Then, we define the {\it degree}\/ of $x$ as
 $\deg x := \min \seb \deg v_i \mid i \bpi \sen$.
Note that $\deg x = \pinfty$ implies that $x = p$.
\end{nota}
\begin{lem}\label{lem:degree-constant-upper}
For $p \neq x = \pstrzinf{u} \in X$,
 there exists an $N > 0$ such that $\deg u_N = \deg u_{N+1} = \dotsb = \deg x$.
\end{lem}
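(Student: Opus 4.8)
The plan is to combine the monotonicity furnished by Lemma~\ref{lem:degree-non-increasing} with the observation that the minimum defining $\deg x$ is attained at a finite index, rather than only in the limit.

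First I would write $x = \pstrzinf{u}$ with $u_i \in V_i$. Since $x \neq p$, the note following Notation~\ref{nota:degree-of-each-x} gives $\deg x \neq \pinfty$, so $\deg x = \min\seb \deg u_i \mid i \bpi \sen$ is a finite positive integer. Consequently the set $\seb \deg u_i \mid i \bpi \sen$ actually contains the finite value $\deg x$, and this minimum is realized at some concrete index $N > 0$; that is, $\deg u_N = \deg x$.

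It then remains to propagate constancy from $N$ onward. By Lemma~\ref{lem:degree-non-increasing}, for every $i \ge N$ we have $\deg u_i \le \deg u_N = \deg x$. On the other hand, $\deg x$ is by definition the minimum of all the $\deg u_i$, so $\deg u_i \ge \deg x$ for every $i$. Combining these two inequalities yields $\deg u_i = \deg x$ for all $i \ge N$, which is exactly the assertion $\deg u_N = \deg u_{N+1} = \dotsb = \deg x$.

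I expect no genuine obstacle here: the argument is a direct consequence of the non-increasing behaviour of the coordinate degrees along $x$ together with the well-ordering of the positive integers. The only point requiring any care is the appeal to $x \neq p$, which is precisely what guarantees that $\deg x$ is finite and hence that the defining minimum is attained at an actual index $N$; without this hypothesis the degrees could remain $\pinfty$ indefinitely and no such $N$ need exist.
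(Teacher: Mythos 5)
Your proof is correct and is essentially the argument the paper's own proof leaves implicit (the paper simply says ``The proof is obvious''): since $x \neq p$ forces $\deg x$ to be a finite value actually attained at some index $N$, Lemma~\ref{lem:degree-non-increasing} squeezes $\deg x \le \deg u_i \le \deg u_N = \deg x$ for all $i \ge N$. One small inaccuracy in your closing aside: if the degrees remained $\pinfty$ indefinitely then $x$ would equal $p$ and the conclusion would hold trivially with any $N$, so the hypothesis $x \neq p$ is what makes $\deg x$ finite, not what makes $N$ exist.
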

\begin{proof}
The proof is obvious.
\end{proof}
The next lemma is not explicitly used in this paper, but we believe it helps to clarify our argument.
\begin{lem}\label{lem:constant-degree}
The degree of each orbit is constant.
\end{lem}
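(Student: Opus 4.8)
The plan is to reduce the statement to the single-step claim that $\deg f(x) = \deg x$ for every $x \in X$. Since $f$ is a homeomorphism, once this is established, applying it to $f^{-1}(y)$ gives $\deg f^{-1}(y) = \deg y$ as well, so that $\deg f^k(x) = \deg x$ for every $k \in \Z$ by induction, which is exactly the assertion that the degree is constant along each orbit. The case $x = p$ is immediate, since $f(p) = p$ and $\deg p = \pinfty$, so I would assume $x \ne p$ and set $d := \deg x$, which is finite by Notation \ref{nota:degree-of-each-x}. Writing $x = \pstrzinf{v}$ and $f(x) = \pstrzinf{w}$, the definition of $E_{\Gcal}$ gives $(v_n,w_n) \in E_n$ for every $n$, and by Lemma \ref{lem:degree-constant-upper} there is an $N$ with $\deg v_n = d$ for all $n \ge N$; equivalently, for such $n$ the vertex $v_n$ is an interior vertex of the circuit $c_{n,d}$.

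The first key observation is local. In $G_n$ the circuits $c_{n,1},\dotsc,c_{n,n}$ and the loop $e_{n,0}$ meet only at $v_{n,0}$, so an interior vertex of $c_{n,d}$ lies on no other circuit and therefore has a unique outgoing edge, namely to the next vertex of $c_{n,d}$. Consequently $w_n$ is either again interior to $c_{n,d}$, in which case $\deg w_n = d$, or equal to $v_{n,0}$, in which case $\deg w_n = \pinfty$; in both cases $\deg w_n \ge d$ for all $n \ge N$. To promote this to a bound over all levels I would invoke Lemma \ref{lem:degree-non-increasing} applied to $f(x)$: the sequence $\deg w_n$ is non-increasing in $n$, hence $\deg w_n \ge \deg w_N \ge d$ also for $n < N$. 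Combining the two ranges gives $\deg w_n \ge d$ for every $n$, and therefore $\deg f(x) \ge d$ by the definition of degree.

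For the matching upper bound I would show that $w_n \ne v_{n,0}$ for at least one $n \ge N$. Indeed, if $w_n = v_{n,0}$ for all $n \ge N$, then since $\fai_n(v_{n+1,0}) = v_{n,0}$ the equality $w_n = v_{n,0}$ propagates downward to every level, forcing $f(x) = p$ and hence $x = p$ (as $f$ is injective and $f(p)=p$), contradicting $x \ne p$. For any $n \ge N$ with $w_n \ne v_{n,0}$ the local observation gives $\deg w_n = d$, so $\deg f(x) \le d$, and therefore $\deg f(x) = d = \deg x$, completing the step and hence the lemma.

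The point requiring the most care is the passage from the level-by-level behaviour of the degree to the global degree, which is defined as a \emph{minimum} over all levels. At an individual level $n$ the degree jumps up to $\pinfty$ precisely when the orbit meets the special vertex $v_{n,0}$, so no single level controls $\deg f(x)$; it is the stabilization of Lemma \ref{lem:degree-constant-upper} together with the monotonicity of Lemma \ref{lem:degree-non-increasing} that pins the minimum down, while the observation that simultaneous passage through $v_{n,0}$ at all high levels would force the point to equal $p$ is what supplies the upper bound.
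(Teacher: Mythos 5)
Your proof is correct, and it reaches the paper's conclusion by a slightly but genuinely different route. Both arguments reduce the lemma to the one-step claim $\deg f(x) = \deg x$, stabilize the level degrees of $x$ via Lemma \ref{lem:degree-constant-upper}, and then examine the edge $(v_n,w_n) \in E_n$, exploiting the fact that an interior vertex of $c_{n,d}$ lies on no other circuit and hence has a unique outgoing edge. The divergence is in how the degenerate possibility $w_n = v_{n,0}$ is excluded. The paper argues quantitatively: from $\fai_{n-1}(c_{n,d}) = e + 2c_{n-1,d} + \dotsb$ it extracts the estimate $\remn(v_{n+k}) > 2l(c_{n+k-1,d+1}) + \dotsb + 2l(c_{n+k-1,n+k-1}) + 1 > 1$ for all $k \ge 1$, so the successor vertex is never $v_{n+k,0}$ at any deep level, and the level degree of $f(x)$ equals $d$ exactly there. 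You argue softly: at deep levels $\deg w_n \in \seb d, \pinfty \sen$, which combined with the monotonicity of Lemma \ref{lem:degree-non-increasing} gives $\deg f(x) \ge d$, and at least one deep level must have $w_n \ne v_{n,0}$, since $w_n = v_{n,0}$ at all deep levels would propagate down through the $\fai_n$ (as $\fai_n(v_{n+1,0}) = v_{n,0}$), forcing $f(x) = p$ and hence $x = p$ by injectivity of $f$. Your route avoids all length estimates and is arguably cleaner, but it does consume the hypothesis that $f$ is injective (available here because the covers are bidirectional, so $(X,f)$ is a homeomorphism), whereas the paper's estimate is injectivity-free and proves the stronger statement that the orbit of $f(x)$ avoids $v_{n,0}$ at every sufficiently deep level --- a quantitative fact of the same flavour as the $\remn \to \pinfty$ computations the paper reuses in later lemmas. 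A small point in your favour: your explicit treatment of negative iterates via $f^{-1}$ is more complete than the paper's proof, which establishes only the forward step $\deg f(x) = \deg x$.
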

\begin{proof}
Let $x = (v_0,v_1,v_2,\dotsc) \in X$.
We must show that $\deg f(x) = \deg x$.
If $x = p$, then the conclusion is obvious.
Let $x \ne p$ and $\deg(x) = l$.
Then, there exists $n \bni$ such that $v_n \ne v_{n,0}$.
By Lemma \ref{lem:degree-constant-upper}, we can assume that
 $\deg v_n = \deg v_{n+1} = \dotsb = l$.
Then, for $k \ge 0$, we get $v_{n+k} \in c_{n+k,l}$.
Thus, for $k \ge 1$, it follows that
 $\remn(v_{n+k}) >
 2l(c_{n+k-1,l+1})+2l(c_{n+k-1,l+2}) + \dotsb + 2l(c_{n+k-1,n+k-1})+1$.
Thus, $\remn(v_{n+k}) \to \pinfty$ as $k \to \pinfty$.
Let $(v_{n+k},v'_{n+k})$ be an edge of the circuit $c_{n+k,l}$.
For $k \ge 1$, it follows that $v'_{n+k} \ne v_{n+k,0}$.
Because $f(x) = (\dotsc, v'_{n+k},v'_{n+k+1},v'_{n+k+2},\dotsc)$,
 it is clear that $\deg f(x) = l$.
\end{proof}
%
%
\begin{lem}\label{lem:x-p-limsup-positive}
For $x \ne p$,
 it follows that $\limsup_{k \to \pinfty}d(f^k(x),p) > 0$.
\end{lem}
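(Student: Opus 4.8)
The plan is to show that the forward orbit of $x$ does not converge to the fixed point $p$. Since $p=(v_{1,0},v_{2,0},\dotsc)$ and $X$ is $0$-dimensional, this is the same as producing a single level $m$ at which the $m$th coordinate of the orbit is off the base vertex $v_{m,0}$ for infinitely many $k$: for any such $k$ we have $f^k(x)\notin U(v_{m,0})$, hence $d(f^k(x),p)\ge \dist(p,\,X\setminus U(v_{m,0}))>0$ because $U(v_{m,0})$ is a clopen neighbourhood of $p$, and this gives $\limsup_k d(f^k(x),p)>0$. Put $l:=\deg x$, which is finite since $x\ne p$, and use Lemma \ref{lem:degree-constant-upper} to fix $N$ with $v_m\in c_{m,l}\setminus\seb v_{m,0}\sen$ for all $m\ge N$; by Lemma \ref{lem:constant-degree} the whole orbit has degree $l$, so each $f^k(x)$ meets $c_{m,l}$ off the base for every sufficiently large $m$.

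First I would record the decisive structural fact that, because $l\le N$, the circuit $c_{N+1,l}$ is \emph{not} the collapsing circuit $c_{N+1,N+1}$: under $\fai_N$ it maps to $e_{N,0}+2c_{N,l}+2c_{N,l+1}+\dotsb+2c_{N,N}+e_{N,0}$, a walk that genuinely traverses $c_{N,l}$. Combined with Lemma \ref{lem:degree-non-increasing} (every coordinate of $f^k(x)$ has degree $\ge l$, and $c_{m,l}$ occurs in $\fai_m(c_{m+1,j})$ only for $j\le l$), this shows that the orbit's level-$m$ coordinate lies on $c_{m,l}$ off the base exactly when its level-$(m+1)$ coordinate lies on $c_{m+1,l}$ and runs through one of the two doubled copies of $c_{m,l}$. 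Thus a single full traversal of $c_{m+1,l}$ at level $m+1$ forces two full traversals of $c_{m,l}$ at level $m$, while conversely the level-$m$ coordinate can reach $c_{m,l}$ only while the level-$(m+1)$ coordinate is on $c_{m+1,l}$.

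The heart of the proof is then a recurrence statement: the orbit returns infinitely often to the clopen set $D:=\seb y:y_N\in c_{N,l}\setminus\seb v_{N,0}\sen\sen$, which contains $x$ but not $p$. By the previous paragraph this is equivalent to the level-$(N+1)$ coordinate re-entering $c_{N+1,l}$ infinitely often, which propagates upward to the assertion that at some, equivalently every, level $\ge N$ the orbit traverses the degree-$l$ circuit infinitely often. To anchor this I would use the constancy of the degree together with the growth estimate $\remn(v_m)\to\pinfty$ from the proof of Lemma \ref{lem:constant-degree}, which pins each initial coordinate $v_m$ inside the leading $e+2c_{m-1,l}$ block of $c_{m,l}$ and so feeds the doubling mechanism at every level. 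Once the recurrence is in hand, a convergent subsequence $f^{k_i}(x)\to z$ with $z\in D$ (so $z\ne p$) gives $\limsup_k d(f^k(x),p)\ge d(z,p)>0$; in fact one expects $x$ itself to be recurrent, so that one may take $z=x$.

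The step I expect to be the main obstacle is exactly this grounding of the recurrence. The danger is that the orbit's degree-$l$ activity escapes to ever higher levels — that the length $b(k)$ of the base-prefix of $f^k(x)$ tends to infinity — and no finite-level bookkeeping with degrees alone rules this out, since the degree-$l$ witnessing coordinate can always hide just above the frontier where the cascade $\bigl(\text{level-}m\text{ eventually fixed at }v_{m,0}\bigr)\Rightarrow\bigl(\text{level-}(m+1)\text{ trapped in }\seb v_{m+1,0}\sen\cup V(c_{m+1,m+1})\bigr)$ has taken effect. The real content is therefore to show that the doubling, whereby each $c_{m,\cdot}$ appears twice in $\fai_{m-1}(c_{m,l})$, forces $b(k)$ back to a bounded value infinitely often, i.e. that the induced dynamics on the degree-$l$ coordinates is recurrent rather than escaping toward $p$; this is where the self-similar, odometer-like return-time structure of the covers must be exploited quantitatively.
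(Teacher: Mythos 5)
You correctly reduce the problem to hitting a fixed clopen set away from $p$ infinitely often, and your structural observation that the degree-$l$ condition pins $u_n$ inside the leading $2c_{n-1,l}$ block of $\fai_{n-1}(c_{n,l})$ is exactly the right fact. But the central recurrence claim --- infinitely many returns to $D=\seb y \mid y_N\in V(c_{N,l})\setminus\seb v_{N,0}\sen\sen$ --- is false in general, and it is precisely the step you flag as the obstacle. Consider a thread in which, for every $n>N$, the coordinate $u_n$ sits inside the \emph{second} copy of $c_{n-1,l}$ in the block $2c_{n-1,l}$ of $\fai_{n-1}(c_{n,l})=e+2c_{n-1,l}+2c_{n-1,l+1}+\dotsb+e$; such a compatible point exists in the inverse limit and has $\deg x=l$. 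For this $x$, once the level-$n$ coordinate completes its current pass of $c_{n,l}$, the continuation at level $n$ is $2c_{n,l+1}+\dotsb+2c_{n,n}+e$, an $r$-type block of degree index $\ge l+1$, which by the note in Notation \ref{nota:more-than-l+1} contains no occurrence of $c_{N,l}$ after projection; since this happens at every level, the level-$N$ coordinate never re-enters $c_{N,l}\setminus\seb v_{N,0}\sen$ after time $\remn(u_N)$. The paper itself confirms this escape is real: in the lemma treating $\deg x+1=\deg y$, the case that $\fai_{\infty,N}(f^i(x))$ treads $c_{N,l}$ only finitely many times is explicitly allowed for. (Relatedly, your inference from Lemma \ref{lem:constant-degree} that ``each $f^k(x)$ meets $c_{m,l}$ off the base for every sufficiently large $m$'' is true only with $m$ depending on $k$, which is the same escape phenomenon; and the hope that $x$ itself is recurrent is likewise unwarranted.) So no odometer-style bookkeeping can force the base-prefix length $b(k)$ back down infinitely often.

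The paper's proof sidesteps all of this with a single shift of index: the witness is a vertex $\tau\in V(c_{N,l+1})\setminus\seb v_{N,0}\sen$, not one on $c_{N,l}$. Because $\deg u_{n-1}=l$ places $u_n$ inside the $2c_{n-1,l}$ block --- hence \emph{before} the $2c_{n-1,l+1}$ block --- the path $p_n$ from $u_n$ to $v_{n,0}$ inside $c_{n,l}$ always contains the whole of $2c_{n-1,l+1}$, no matter which of the two copies of $c_{n-1,l}$ the thread occupies; projecting, $\fai_{n,N}(p_n)$ winds around $c_{N,l+1}$ exponentially many times in $n$, so $\fai_{\infty,N}(f^i(x))=\tau$ for unboundedly many times $i$ as $n\to\pinfty$. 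Since $U(\tau)$ is clopen with $p\notin U(\tau)$, this gives $\limsup_{k\to\pinfty} d(f^k(x),p)\ge \dist(p,U(\tau))>0$. In short: your framework is sound, but the recurrence must be asserted for full traversals of the \emph{next} circuit $c_{N,l+1}$ downstream of the pinned position, not for returns to $c_{N,l}$ itself; as written, the proposal has a genuine gap that cannot be closed in the form stated.
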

\begin{proof}
Let $\deg x = l$ and $x = (u_0,u_1,u_2,\dotsc)$.
Because $p \ne x$, we have that $l < \pinfty$.
By Lemma \ref{lem:degree-constant-upper},
 there exists an $N \bni$ such that $\deg u_{n} = l$ for all $n \ge N$.
Let $n \ge N$.
Then, $u_{n} \in c_{n,l}$.
Let $p_n$ be the path of $c_{n,l}$ from $u_{n}$ to $v_{n,0}$.
By the definition of $N$, it follows that
 $\fai_{n,N}(u_{n}) \in V(c_{N,l})$.
For $0 \le i \le \remn(u_{n})$,
 $\fai_{\infty,n}(f^i(x))$ follows the path from $u_{n}$ to $v_{n,0}$.
We get $\fai_{n-1}(c_{n,l})
 = e + 2c_{n-1,l} + 2c_{n-1,l+1} + \dotsb + 2c_{n-1,n-1} + e$.
Thus, $p_n$ follows the totality of $2c_{n-1,l+1}$,
and $\fai_{n-1,N}(2c_{n-1,l+1})$ winds around $c_{N,l+1}$ exactly $2^n$ times.
Therefore, $\fai_{n,N}(p_n)$ winds around $c_{N,l+1}$ at least $2^n$ times.
Fixing $\tau \in c_{N,l+1}$ such that $\tau \ne v_{N,0}$,
 $\fai_{\infty,N}(f^i(x)) = \tau$ at least $2^n$ times.
Because $n > 0$ is arbitrary, the conclusion is now obvious.
\end{proof}
\begin{lem}\label{lem:xy-proximal}
Let $x \ne y$ be distinct from $p$.
Then, it follows that 
\[ \liminf_{k \to \pinfty}d(f^k(x),f^k(y)) = 0.\]
\end{lem}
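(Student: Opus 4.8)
The goal is to show that any two points $x \ne y$ in $X \setminus \{p\}$ are proximal, i.e., their orbits come arbitrarily close infinitely often. My plan is to produce, for each level $n$, a time $k$ at which $f^k(x)$ and $f^k(y)$ lie in the same cell $U(v_{n,0})$ of the decomposition $\Ucal_n$; since the cells $U(v_{n,0})$ shrink to the single point $p$ as $n \to \pinfty$, and $\mesh$ of $\Ucal_n$ tends to $0$, this forces the liminf of the distance to be $0$.

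First I would fix a level $n$ and look at the coordinates $x_n$ and $y_n$ of $x$ and $y$ in $V_n$ (using the notation $x = (u_0, u_1, \dotsc)$). Both orbits, projected to $G_n$ via $\fai_{\infty,n}$, follow walks that pass through $v_{n,0}$ infinitely often, exactly as in the proof of Lemma~\ref{lem:proximal-to-p}. The key observation is that once a point passes through the special vertex $v_{n,0}$, its subsequent trajectory in $G_n$ is \emph{determined} by which circuit $c_{n,i}$ it enters, and the circuit is in turn governed by the degree. So the plan is to find a common time at which both $f^{k}(x)$ and $f^{k}(y)$ sit at $v_{n,0}$ \emph{simultaneously}. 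To arrange this, I would use the fact that the two orbits visit $U(v_{n,0})$ along an infinite, syndetically-structured set of times, and then argue that two such visiting-time sets must intersect — or, more robustly, that one can wait for $x$ to reach $v_{n,0}$ and then ride along until $y$ also reaches $v_{n,0}$, exploiting that after passing $v_{n,0}$ every orbit returns to $v_{n,0}$ within a bounded number of steps governed by the circuit lengths $l(n,i)$.

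The cleanest route, which I expect to work, is to pass to a deeper level $m \gg n$ and use the self-similar structure of the covers. Recall from the construction that $\fai_n(c_{n+1,i}) = e + 2c_{n,i} + 2c_{n,i+1} + \dotsb + 2c_{n,n} + e$, so a single traversal of a circuit at level $m$ projects down to \emph{many} full traversals of circuits at level $n$, each one beginning and ending at $v_{n,0}$. Because each such traversal returns to $v_{n,0}$, I can synchronize: choose $m$ large enough that within a single level-$m$ circuit passage the projections $\fai_{\infty,n}(f^k(x))$ and $\fai_{\infty,n}(f^k(y))$ are each forced back to $v_{n,0}$ repeatedly, and then locate a common return time. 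Since $\remn$ values and circuit lengths at the relevant degrees are explicitly controlled, I can bound the offset between the two orbits' returns and conclude they coincide at $v_{n,0}$ infinitely often for every $n$.

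The main obstacle will be the synchronization step: showing that the two visiting-time sets to $U(v_{n,0})$ actually intersect, rather than merely both being infinite. The difficulty is that $x$ and $y$ may sit on circuits of different degrees or at different phases, so their return times to $v_{n,0}$ need not match a priori. I would overcome this by arguing inductively on $n$: having achieved coincidence at $v_{n,0}$ at some time $k_n$, I would track both orbits forward from $k_n$ and use the branching structure at level $n+1$ — where both points, having just shared the cell $U(v_{n,0})$, are constrained to lie in cells mapping to $v_{n,0}$ and hence must re-synchronize at the next-deeper special vertex within a controlled window. This gives an increasing sequence $k_1 < k_2 < \dotsb$ with $f^{k_n}(x), f^{k_n}(y) \in U(v_{n,0})$, and since $\diam U(v_{n,0}) \to 0$, the liminf is $0$.
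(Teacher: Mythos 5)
Your setup and target coincide with the paper's: find, for each fixed level $N$, a common time $k$ with $f^k(x),f^k(y)\in U(v_{N,0})$, then let $N\to\pinfty$; and you correctly identify synchronization of the two visiting-time sets as the crux. But the mechanism you offer for that step does not work. Both orbits' visit times to $U(v_{N,0})$ are indeed syndetic (consecutive visits are separated by at most $M(N)=\max_{1\le j\le N} l(c_{N,j})$, since every projected walk in $G_N$ is a concatenation of circuits through $v_{N,0}$ and loops $e_{N,0}$), but two syndetic sets need not intersect, as you note. Your inductive repair fails at exactly this point: knowing $f^{k_n}(x),f^{k_n}(y)\in U(v_{n,0})$ only tells you that both level-$(n+1)$ coordinates lie in $\fai_n^{-1}(v_{n,0})$, and that fibre contains many vertices scattered through \emph{all} circuits $c_{n+1,i}$ (one for each occurrence of $v_{n,0}$ in $e+2c_{n,i}+\dotsb+e$) as well as the whole of $V(c_{n+1,n+1})$; the two points can sit at arbitrarily out-of-phase positions in these circuits, so there is no ``controlled window'' after $k_n$ in which both must reach $v_{n+1,0}$ simultaneously --- you face the same intersection problem again, one level down. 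Likewise ``waiting for $y$ to reach $v_{n,0}$'' fails because $x$ does not stay there while you wait, unless you prove it dwells.

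The missing idea --- the paper's actual device --- is precisely such a \emph{long dwell}. Because $\fai_n$ collapses $c_{n+1,n+1}$ to $v_{n,0}$, every circuit of index greater than $N$ projects down to level $N$ to a power of the loop $e_{N,0}$. Hence $\fai_{n,N}(c_{n,l})$ splits as $p_n+q_n$, where $p_n$ is a concatenation of $e_{N,0}$ and circuits $c_{N,j}$ ($l\le j\le N$), and $q_n=L(N,n)\,e_{N,0}$ is a terminal run with $L(N,n)\to\pinfty$ as $n\to\pinfty$: the \emph{last} $L(N,n)$ steps of any level-$n$ circuit traversal are spent parked in $U(v_{N,0})$. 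Choosing $n$ so large that $L(N,n)>M(N)$, whichever of the two orbits finishes its level-$n$ circuit first is held in $U(v_{N,0})$ for a consecutive stretch longer than the other's maximal return gap, so with $L(n)=\min\seb \remn(\fai_{\infty,n}(x)),\remn(\fai_{\infty,n}(y))\sen$ there is a common time $i$ with $L(n)-M(N)\le i\le L(n)$ and $f^i(x),f^i(y)\in U(v_{N,0})$; since $L(n)\to\pinfty$, this yields $\liminf_{k\to\pinfty} d(f^k(x),f^k(y))\le \diam U(v_{N,0})$, and $N$ arbitrary finishes the proof. Without this dwell computation (or an equivalent synchronization device), your outline has a genuine gap and cannot be completed as written.
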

\begin{proof}
Let $x \ne y \in X$ be distinct from $p$, $x = \pstrzinf{u}$, and $y = \pstrzinf{v}$.
It follows that $\deg x, \deg y < \pinfty$.
Let $l = \deg x$ and $l' = \deg y$.
By Lemma \ref{lem:degree-constant-upper}, there exists $N > 0$ such that
 $\deg u_N = \deg u_{N+1} = \dotsb = l$ and $\deg v_N = \deg v_{N+1} = \dotsb = l'$.
Note that $l,l' \le N$.
For all $n > N$, $u_{n} \in c_{n,l}$ and $v_{n} \in c_{n,l'}$.
Because
\[
\begin{array}{lll}\fai_{n-1}(c_{n,l}) & = & e_{n-1,0} +
 2c_{n-1,l} + 2c_{n-1,l+1} + 2c_{n-1,l + 2}\\
 & & + \dotsb + 2c_{n-1,n-1} + e_{n-1,0},
\end{array}
\]
it follows that
\[
\begin{array}{lll}\fai_{n,N}(c_{n,l}) & = & e_{N,0} + 2\fai_{n-1,N}(c_{n-1,l})
 + 2\fai_{n-1,N}(c_{n-1,l+1})
 + 2\fai_{n-1,N}(c_{n-1,l + 2})\\
 & & + \dotsb + 2\fai_{n-1,N}(c_{n-1,N})\\
 & & + 2\fai_{n-1,N}(c_{n-1,N+1})+ \dotsb + 2\fai_{n,N}(c_{n-1,n-1}) + e_{N,0}.
\end{array}
 \]
We write the first two lines as 
\[
\begin{array}{lll} p_n & = & e_{N,0} + 2\fai_{n-1,N}(c_{n-1,l}) + 2\fai_{n-1,N}(c_{n-1,l+1})
 + 2\fai_{n-1,N}(c_{n-1,l + 2})\\
 & & + \dotsb + 2\fai_{n-1,N}(c_{n-1,N})
\end{array}
 \]
and the last line as 
 $q_n = 2\fai_{n-1,N}(c_{n-1,N+1})+ \dotsb + 2\fai_{n,N}(c_{n-1,n-1}) + e_{N,0}$.
Note that we can write $q_n = L(N,n)e_{N,0}$ for the positive integer $L(N,n)$.
It is clear that $L(N,n) \to \pinfty$ as $n \to \pinfty$.
Because $\fai_{\infty,N}(x) \in V(c_{N,l})\setminus \seb v_{N,0} \sen$,
 the sequence $\fai_{\infty,N}(f^i(x))$ $(i \ge 0)$
 starts from within $2\fai_{n-1,N}(c_{n-1,l})$.
Therefore, this sequence lies in the $p_n$ for small $i$,
 and enters into $q_n$ for larger $i$,
 eventually reaching the end of $q_n$.
Note that $p_n$ is a repetition of $e_0$ and $c_{N,j}$ with $l \le j \le N$.
Let $M(N) := \max_{1 \le j \le N} l(c_{N,j}) < \pinfty$.
Take $n$ to be sufficiently large such that $M(N) < L(N,n)$.
The same situation occurs for $y$, and at least one of
 $\fai_{\infty,N}(f^i(x))$ or $\fai_{\infty,N}(f^i(y))$
 enters into $q_n$;
 in the period that is less than or equal to $M(N)$, the other takes the value $v_{N,0}$.
Let $L(n) := \min \seb \remn(\fai_{\infty,n}(x)), \remn(\fai_{\infty,n}(y) \sen$.
Then, there exists an $i$ such that
 both $L(n) - M(N) \le i \le L(n)$ and $\seb f^i(x), f^i(y) \sen \subset U(v_{N,0})$
 are satisfied.
Because $L(n) \to \pinfty$ as $n \to \pinfty$,
 we get $\liminf_{i \to \pinfty}d(f^i(x),f^i(y)) \le \diam U(v_{N,0})$.
Because we can take $N$ to be arbitrarily large,
 we have $\liminf_{i \to \pinfty}d(f^i(x),f^i(y)) = 0$.
\end{proof}
\begin{nota}
Let $x \ne y \in X$, $x = \pstrzinf{u}$, and $y = \pstrzinf{v}$.
Let $n \bni$.
Suppose that there exists some $1 \le i \le n$ such that $u_n,v_n \in V(c_{n,i})$,
 $u_n = v_{n,i,j}$, and $v_n = v_{n,i,j'}$.
Then, we denote $\gap (u_n,v_n) := j' - j$.
\end{nota}
\begin{lem}\label{lem:bounded-gap-on-the-same-orbit}
Let $x \ne y \in X$, $x = \pstrzinf{u}$, and $y = \pstrzinf{v}$.
Suppose that $\deg x = \deg y < \pinfty$
 and $\limsup_{n \to \pinfty} \left|\gap(u_n,v_n)\right| < \pinfty$.
Then, there exists a $d \in \Z$ such that $f^d(x) = y$.
\end{lem}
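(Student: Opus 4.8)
The plan is to prove that the signed gap $g_n := \gap(u_n,v_n)$ is eventually independent of $n$, and that this stable value is exactly the required shift $d$.

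First I would pass to the stable range of the degree. Since $\deg x = \deg y = l < \pinfty$, Lemma~\ref{lem:degree-constant-upper} supplies an $N$ with $\deg u_n = \deg v_n = l$ for all $n \ge N$; hence $u_n, v_n \in V(c_{n,l}) \setminus \seb v_{n,0} \sen$, the quantity $\gap(u_n,v_n)$ is defined, and I may write $u_n = v_{n,l,j_n}$, $v_n = v_{n,l,j'_n}$, and $g_n = j'_n - j_n$. The hypothesis $\limsup_n \abs{g_n} < \pinfty$ then gives a bound $\abs{g_n} \le B$ for all large $n$.

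The heart of the argument is to track how $g_n$ transforms under the cover. Because $\fai_n$ carries the walk $c_{n+1,l}$ vertex-by-vertex onto $e_{n,0} + 2c_{n,l} + 2c_{n,l+1} + \dotsb + 2c_{n,n} + e_{n,0}$, the vertex at position $j$ of $c_{n+1,l}$ is sent to the vertex at position $j$ of this walk. Since for $i \ne l$ the circuits $c_{n,i}$ meet $c_{n,l}$ only at $v_{n,0}$, an interior vertex $v_{n,l,s}$ of $c_{n,l}$ occurs in the image walk solely inside the initial block $2c_{n,l}$, and there at exactly the two positions $1+s$ and $1 + l(n,l) + s$, differing by $l(n,l)$. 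Hence $j_{n+1} \equiv j_n + 1$ and $j'_{n+1} \equiv j'_n + 1 \pmod{l(n,l)}$, so that
\[ g_{n+1} \equiv g_n \pmod{l(n,l)}. \]
I expect this bookkeeping — locating precisely where the interior vertices of $c_{n,l}$ sit in the image walk, and verifying that orientation is preserved so that no sign reversal of the gap occurs — to be the main obstacle.

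Granting the congruence, the conclusion follows quickly. The lengths $l(n,l)$ strictly increase to $\pinfty$, so once $l(n,l) > 2B$ the bound $\abs{g_n} \le B$ forces the multiple of $l(n,l)$ in the congruence to vanish, i.e. $g_{n+1} = g_n$; thus $g_n$ equals a constant $g$ for all large $n$. For such $n$, both $j_n$ and $j_n + g = j'_n$ lie strictly between $0$ and $l(n,l)$, so advancing $x$ by $g$ steps stays inside $c_{n,l}$ without meeting $v_{n,0}$, which gives $f^g(x)_n = v_{n,l,j_n+g} = v_n$. Finally, since every point $z \in X$ satisfies $z_n = \fai_n(z_{n+1})$, agreement of $f^g(x)$ and $y$ in all sufficiently large coordinates propagates downward to all coordinates. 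Therefore $f^g(x) = y$, and $d = g$ is the desired integer.
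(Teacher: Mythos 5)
Your proof is correct, and it in fact establishes more than the paper does. The paper's entire proof is one line: since the integer-valued gaps $\gap(u_n,v_n)$ are bounded, by pigeonhole some value $d$ occurs along a subsequence $n_k$, ``and the conclusion is obvious.'' Note that a subsequence already suffices for the endgame you carry out: if $f^d(x)$ and $y$ agree in the coordinates $n_k$ for all $k$, then agreement propagates downward through $z_n = \fai_n(z_{n+1})$ to every coordinate, so one never needs the gap to be \emph{eventually} constant. Your congruence analysis --- locating the two occurrences of an interior vertex $v_{n,l,s}$ at positions $1+s$ and $1+l(n,l)+s$ of the image walk $e + 2c_{n,l} + \dotsb$, deducing $g_{n+1} - g_n \in \seb 0, \pm l(n,l) \sen$, and using $l(n,l) \to \pinfty$ to force eventual constancy --- is therefore a detour, but a sound one, and it buys a genuinely stronger structural fact (the gap stabilizes, not merely recurs). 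You also supply the two verifications the paper waves off as obvious: that $f^d(x)_n = v_{n,l,j_n+d}$ because the intermediate positions stay interior to $c_{n,l}$ (with $d<0$ handled by invertibility, which holds since the covers are bidirectional), and the downward propagation of coordinatewise agreement. So: same skeleton as the paper (bounded gap yields a fixed shift value $d$, which is then realized as $f^d(x)=y$), with the pigeonhole step replaced by a stronger stabilization argument and the ``obvious'' steps made explicit.
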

\begin{proof}
Because there exists an integer $d$ and a subsequence $n_k~(k \bpi)$ such that
 $\gap(u_{n_k},v_{n_k}) =d $ for all $k$, the conclusion is obvious.
\end{proof}
\begin{lem}\label{lem:on-the-same-orbit-positive}
Let $x \ne y \in X$ be such that $f^d(x) = y$ for some $d \ne 0$.
Then, it follows that
 \[\limsup_{k \to \pinfty}d(f^k(x),f^k(y)) > 0.\]
\end{lem}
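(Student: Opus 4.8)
The plan is to reduce the claim to a purely combinatorial statement at a single finite level. Since $d(f^k(x),f^k(y))$ is symmetric in the two arguments and $x=f^{-d}(y)$, I may assume $d>0$. Because $p$ is the only fixed point (Lemma~\ref{lem:no-periodic-orbits}), neither $x$ nor $y$ equals $p$, so by Lemma~\ref{lem:constant-degree} the common degree $l:=\deg x=\deg y$ is finite. Writing $w_k:=\fai_{\infty,N}(f^k(x))$ for the level-$N$ walk traced by the orbit of $x$, the key observation is that $\fai_{\infty,N}(f^k(y))=\fai_{\infty,N}(f^{k+d}(x))=w_{k+d}$. Hence it suffices to exhibit one level $N$ for which $w_k\ne w_{k+d}$ holds for infinitely many $k$: distinct vertices of $V_N$ index disjoint clopen sets of $\Ucal_N$, so with $\delta_N:=\min\seb \dist(U,U')\mid U\ne U'\in\Ucal_N\sen>0$, the inequality $w_k\ne w_{k+d}$ forces $f^k(x)\in U(w_k)$ and $f^k(y)\in U(w_{k+d})$ to lie in different members of $\Ucal_N$, so $d(f^k(x),f^k(y))\ge\delta_N$, and therefore $\limsup_{k\to\pinfty}d(f^k(x),f^k(y))\ge\delta_N>0$.

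To choose $N$, I would use Lemma~\ref{lem:degree-constant-upper} to fix an $N_0$ past which the coordinates of $x$ have degree stabilized at $l$, and then take $N\ge\max(N_0,l+1)$ large enough that $L:=l(c_{N,l+1})>d$; this is possible since the circuit lengths grow without bound, as recorded before Lemma~\ref{lem:no-periodic-orbits}. For such $N$, the proof of Lemma~\ref{lem:x-p-limsup-positive} already shows that $(w_k)$ visits some interior vertex $\tau=v_{N,l+1,j}$ of $c_{N,l+1}$ infinitely often. Moreover, since $G_N$ is a bouquet of circuits meeting only at $v_{N,0}$, every interior vertex of $c_{N,l+1}$ has a unique incoming and a unique outgoing edge; so each visit to $\tau$ is flanked by a complete traversal of the circuit, i.e. there is an index $i$ with $w_{i+t}=v_{N,l+1,t}$ for all $0\le t\le L$, and such $i$ can be chosen arbitrarily large.

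With these facts in hand, the final step is to show that $(w_k)$ is not eventually $d$-periodic. Suppose, for contradiction, that $w_{k+d}=w_k$ for all $k\ge i_0$. Pick a traversal $w_{i+t}=v_{N,l+1,t}$ ($0\le t\le L$) with $i\ge i_0$; periodicity then gives $v_{N,0}=v_{N,l+1,0}=w_i=w_{i+d}=v_{N,l+1,d}$, whereas $0<d<L$ forces $v_{N,l+1,d}\ne v_{N,l+1,0}$, a contradiction. Thus no such $i_0$ exists, so $w_k\ne w_{k+d}$ for infinitely many $k$, completing the argument. I expect the main obstacle to be the second paragraph: securing both that $c_{N,l+1}$ is entered infinitely often and, more delicately, that each entry drags along a forced full traversal of length $L>d$. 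This forcing, a consequence of the figure-8 structure of $G_N$, is precisely what converts the fixed orbit-gap $d$ into a persistent, detectable discrepancy at level $N$.
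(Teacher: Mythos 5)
Your proof is correct, but it takes a genuinely different route from the paper's. The paper argues softly, by contradiction and compactness: if $\limsup_{k \to \pinfty} d(f^k(x),f^k(y)) = 0$, then by Lemma \ref{lem:x-p-limsup-positive} the orbit of $x$ has a limit point $z \ne p$ along some subsequence $k_i$; since $f^{k_i}(y) = f^{d}(f^{k_i}(x))$, continuity of $f^d$ together with the assumed asymptotic agreement forces $f^d(z) = z$, and $z \ne p$ then contradicts Lemma \ref{lem:no-periodic-orbits}. That is the whole proof --- three lines, using only that $p$ is the sole periodic point and that orbits do not converge to $p$; it would work verbatim in any compact system with those two properties. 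You instead give a direct, quantitative argument at a single finite level: choosing $N$ past degree stabilization with $l(c_{N,l+1}) > d$, you observe that interior vertices of a circuit in the bouquet $G_N$ have in- and out-degree one, so every visit of the itinerary $w_k = \fai_{\infty,N}(f^k(x))$ to an interior vertex of $c_{N,l+1}$ forces a full traversal $w_{i+t} = v_{N,l+1,t}$ ($0 \le t \le L$), and such traversals occur at unbounded times by the winding estimate inside the proof of Lemma \ref{lem:x-p-limsup-positive}; any one traversal exhibits $w_i = v_{N,0} \ne v_{N,l+1,d} = w_{i+d}$, hence $d(f^i(x),f^i(y)) \ge \delta_N$ infinitely often. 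All the steps check out (including the reduction to $d>0$ via symmetry and the fact that $L \ge 2$ guarantees interior vertices). What each approach buys: the paper's proof is shorter and conceptually cleaner, outsourcing all combinatorics to Lemmas \ref{lem:no-periodic-orbits} and \ref{lem:x-p-limsup-positive}; yours avoids compactness and the periodic-point classification entirely, produces an explicit positive lower bound $\delta_N$ for the limsup together with explicit witnessing times, and in effect re-derives aperiodicity at level $N$ by hand --- at the cost of a longer verification and of leaning on the internals of the proof of Lemma \ref{lem:x-p-limsup-positive} (the winding of the tail path around $c_{N,l+1}$) rather than on its statement, which only gives separation from $p$.
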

\begin{proof}
We prove the statement by contradiction.
Assume that $\limsup_{k \to \pinfty}d(f^k(x),f^k(y)) = 0$.
By Lemma \ref{lem:x-p-limsup-positive}, $\limsup_{k \to \pinfty}d(p,f^k(x)) > 0$.
Thus, there exists a point $z \ne p$ and a subsequence $k_i$ $(i \bpi)$ such that
  $f^{k_i}(x) \to z$ as $i \to \pinfty$.
By the assumption, we have $f^{k_i}(y) \to z$ as $i \to \pinfty$.
Thus, we get $f^d(z) = z$.
Because $z \ne p$, this contradicts Lemma \ref{lem:no-periodic-orbits}.
\end{proof}
\begin{nota}\label{nota:more-than-l+1}
Let $m > d \ge 1$.
For $n > m$, we denote 
\[r_{n,d,m} := 2\fai_{n,m}(c_{n,d}) + 2\fai_{n,m}(c_{n,d+1})
               + \dotsb+ 2\fai_{n,m}(c_{n,n}) + e_{m,0}.\]
Note that in $r_{n,d,m}$, there is no occurrence of $c_{m,d'}$ with $d' < d$.
\end{nota}

\begin{lem}\label{lem:gap-calculation}
Let $n \gg N \gg  l \ge 1$.
In $\fai_{n+2,N}(c_{n+2,l})$,
 let $c_{N,l} + s + c_{N,l}$ be two occurrences
 of $c_{N,l}$ with no occurrence of $c_{N,l}$ in $s$.
Then, $l(s) = m - N + 1 + \sum_{k = N}^{m}l(r_{k,l+1,N})$ for some $m$ with
 $N \le m \le n$.
Besides such appearances, we have those of the form $c_{N,l}+c_{N,l}$.
Further, when we write $\fai_{n+1,N}(c_{n+1,l}) = \dotsb + c_{N,l} + s$ with no occurrence of $c_{N,l}$ in walk $s$, it follows that $l(s) = \sum_{k = N}^{n}r_{k,l+1,N}$.
\end{lem}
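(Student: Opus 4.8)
The plan is to reduce the statement to a single doubling recursion for the walks $W_{n,j} := \fai_{n,N}(c_{n,j})$ in $G_N$ (for $n \ge N$ and $j \ge l$) and then to track, by induction on $n$, three things: the prefix of $W_{n,l}$ before its first occurrence of $c_{N,l}$, the suffix after its last occurrence, and the walks filling the gaps between consecutive occurrences. Applying $\fai_{n,N}$ to the defining identity $\fai_n(c_{n+1,l}) = e_{n,0} + 2c_{n,l} + 2c_{n,l+1} + \dotsb + 2c_{n,n} + e_{n,0}$, and using $\fai_{n,N}(e_{n,0}) = e$ together with Notation \ref{nota:more-than-l+1}, I obtain the recursion
\[ W_{n+1,l} = e + W_{n,l} + W_{n,l} + r_{n,l+1,N}, \qquad W_{N,l} = c_{N,l}, \]
where $r_{N,l+1,N}$ is read with $\fai_{N,N}$ the identity. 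This is the engine of the whole argument.

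Two structural facts make the bookkeeping clean. First, every piece appearing in the recursion ($e$, each $W_{n,j}$, and $r_{n,l+1,N}$) is a closed walk based at the special vertex $v_{N,0}$, so all concatenations occur at $v_{N,0}$; since the circuit $c_{N,l}$ meets $v_{N,0}$ only at its two endpoints, no occurrence of $c_{N,l}$ can straddle a junction. Second, by the remark following Notation \ref{nota:more-than-l+1}, neither $e$ nor $r_{n,l+1,N}$ contains any occurrence of $c_{N,l}$. Together these show that every occurrence of $c_{N,l}$ in $W_{n+1,l}$ lies entirely within one of the two copies of $W_{n,l}$.

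Now I run the induction. Writing $P_n$ and $Q_n$ for the prefix and suffix of $W_{n,l}$ (both free of $c_{N,l}$), the recursion gives $P_{n+1} = e + P_n$ and $Q_{n+1} = Q_n + r_{n,l+1,N}$ with $P_N = Q_N = \emptyset$, hence $l(P_n) = n - N$ and $l(Q_n) = \sum_{k=N}^{n-1} l(r_{k,l+1,N})$. The crucial observation is that the only new gap created in passing from $W_{n,l}$ to $W_{n+1,l}$ is the one joining the two copies, namely $g_{n+1} := Q_n + P_n$; every other gap is inherited unchanged. Consequently the gaps occurring in $\fai_{n+2,N}(c_{n+2,l})$ are exactly $g_{N+1}, g_{N+2}, \dotsc, g_{n+2}$, and
\[ l(g_{j+1}) = l(Q_j) + l(P_j) = (j - N) + \sum_{k=N}^{j-1} l(r_{k,l+1,N}). \]
Substituting $m = j - 1$ rewrites this as $(m - N + 1) + \sum_{k=N}^{m} l(r_{k,l+1,N})$ with $m$ running over $N - 1 \le m \le n$: the value $m = N-1$ gives $g_{N+1} = \emptyset$, i.e. the appearances of the form $c_{N,l} + c_{N,l}$, while $N \le m \le n$ gives the asserted formula for the nonempty gaps. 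The final assertion falls out of the same bookkeeping, since the tail of $\fai_{n+1,N}(c_{n+1,l})$ after its last $c_{N,l}$ is precisely $Q_{n+1}$, of length $\sum_{k=N}^{n} l(r_{k,l+1,N})$.

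The main obstacle is not the arithmetic but the justification that the gap decomposition is preserved exactly under the doubling recursion. This rests on the two facts isolated above --- that $P_n$ and $Q_n$ never acquire an occurrence of $c_{N,l}$ (an easy induction, since the newly appended pieces $e$ and $r_{n,l+1,N}$ are $c_{N,l}$-free) and that occurrences of $c_{N,l}$ cannot cross a junction (because all junctions sit at $v_{N,0}$, which the circuit visits only at its endpoints). Once these are secured, the length computations are routine and the index shift $m = j - 1$ matches the three cases of the statement.
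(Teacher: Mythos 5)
Correct, and essentially the paper's own argument: your doubling recursion $\fai_{n+1,N}(c_{n+1,l}) = e + 2\fai_{n,N}(c_{n,l}) + r_{n,l+1,N}$ is exactly the identity the paper unrolls by successive projections $\fai_{n+2,n}, \fai_{n-1}, \dotsc$, and your gap walks $Q_j + P_j = r_{N,l+1,N} + \dotsb + r_{j-1,l+1,N} + (j-N)e$ coincide with the paper's bridging walks, reproducing all three cases (the gaps $l(s) = m-N+1+\sum_{k=N}^{m} l(r_{k,l+1,N})$, the empty gap giving $c_{N,l}+c_{N,l}$, and the tail of length $\sum_{k=N}^{n} l(r_{k,l+1,N})$). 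Your bottom-up prefix/suffix/gap bookkeeping at level $N$ is a tidier organization of the same induction, and it makes explicit a point the paper leaves implicit, namely that occurrences of $c_{N,l}$ cannot straddle concatenation junctions because all junctions sit at $v_{N,0}$, which $c_{N,l}$ visits only at its endpoints.
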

\begin{proof}
We abbreviate  $e_{m,0}$ as $e$ for all $m \bni$.
It follows that
\[\fai_{n+1}(c_{n+2,l}) = e + 2c_{n+1,l} + 2c_{n+1,l+1} + \dotsb + 2c_{n+1,n+1} + e.\]
Thus, we obtain the following:
\[\fai_{n+1}(c_{n+2,l}) = e + 2 c_{n+1,l} + r_{n+1,l+1,n+1}.\]
Therefore,
 it is sufficient to consider the gap between occurrences of $c_{N,l}$ in $2c_{n+1,l}$.
We shall calculate the largest gap in $2c_{n+1,l} = c_{n+1,l} + c_{n+1,l}$,
 and show that it is between the last occurrence of $c_{N,l}$ in the first $c_{n+1,l}$
 and the first occurrence of it in the last $c_{n+1,l}$.
We calculate 
\[\fai_{n+2,n}(c_{n+2,l}) = e + 2(e + 2c_{n,l} + r_{n,l+1,n}) + r_{n+1,l+1,n}.\]
In the last expression,
 both $2c_{n,l}$ and $c_{n,l} + r_{n,l+1,n} +e + c_{n,l}$ occur.
Thus, the largest gap is in $c_{n,l} + r_{n,l+1,n} +e + c_{n,l}$,
 between the last $c_{N,l}$ in $c_{n,l}$ and the first one in $c_{n,l}$, 
 as stated above.
This gives
\[
\begin{array}{lll}
\fai_{n-1}(c_{n,l} + r_{n,l+1,n} +e + c_{n,l})
 & = &  e + 2c_{n-1,l} + r_{n-1,l+1,n-1} + r_{n,l+1,n-1} + e\\
 & &  + e +2c_{n-1,l} + r_{n-1,l+1,n-1}.
\end{array}
\]
Thus, by induction, if we project the above expression by $\fai_{n-1,N}$,
 the last occurrence of $c_{N,l}$ in the first $2c_{n-1,l}$
 and the first occurrence of $c_{N,l}$ in the last occurrence of $2c_{n-1,l}$
 can be bridged as:
\[ c_{N,l} + r_{N,l+1,N}+r_{N+1,l+t,N} + \dotsb +r_{n,l+1,N} + (n -N +1)e + c_{N,l}.\]
Therefore,
 if we write $s = r_{N,l+1,N}+r_{N+1,l+t,N} + \dotsb +r_{n,l+1,N} + (n -N +1)e$,
 then $l(s) = n - N + 1+ \sum_{k = N}^{n}l(r_{k,l+1,N})$.
This also shows that it is the largest gap in $2c_{n+1,l}$.
We have seen that
 the gap between occurrences of $c_{N,l}$ appears as the largest gap in 
 $2c_{m,l}$ with $N < m \le n+1$.
Beside these gaps, of course there exist occurrences of the form $2c_{N,l}$.
It remains to demonstrate the last statement.
As in the above calculation,
 it follows that $\fai_n(c_{n+1,l}) = e + c_{n,l} + c_{n,l} + r_{n,l+1,n}$.
Consequently,
 we have
\[
\begin{array}{lll}
\fai_{n+1,n-1}(c_{n+1,l}) & = & \dotsb + \fai_{n-1}(c_{n,l}) + r_{n,l+1,n-1}\\
 &  = &  \dotsb + e + 2c_{n-1,l} + r_{n-1,l+1,n-1} + r_{n,l+1,n-1}.
\end{array}
\]
In this way, we get $\fai_{n+1,N}(c_{n+1,l})
 = \dotsb + c_{N,l} + r_{N,l+1,N} + r_{N+1,l+1,N} + \dotsb + r_{n,l+1,N}$.
Thus, we have the desired conclusion.
\end{proof}
From the proof of the above lemma, we get the following:
\begin{lem}\label{lem:gap-between-CnCn}
In $\fai_{n,N}(c_{n,l} + c_{n,l})$,
 let $A$ be the last occurrence of $c_{N,l}$ in the first $c_{n,l}$,
 and $B$ the first occurrence in the last $c_{n,l}$.
If we write $\fai_{n,N}(c_{n,l}+c_{n,l}) = \dotsb + A + s + B + \dotsb$,
then 
\[ s = r_{N,l+1,N}+r_{N+1,l+t,N} + \dotsb +r_{n-1,l+1,N} + (n -N)e.\]
In particular, $l(s) = n - N + \sum_{k = N}^{n-1}l(r_{k,l+1,N})$.
\end{lem}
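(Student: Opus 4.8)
The plan is to read off this statement as a one-level-lower transcription of the central-gap computation already performed in the proof of Lemma~\ref{lem:gap-calculation}. There the largest gap between successive occurrences of $c_{N,l}$ inside $2c_{n+1,l}$ was located between the last such occurrence in the first $c_{n+1,l}$ and the first such occurrence in the last $c_{n+1,l}$; the quantity $s$ demanded here is exactly that gap for $2c_{n,l}$, so I would rerun the same argument with $n+1$ replaced by $n$.

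First I would expand one level. Since $\fai_{n-1}(c_{n,l}) = e + 2c_{n-1,l} + r_{n-1,l+1,n-1}$ and $\fai_{n,N} = \fai_{n-1,N} \circ \fai_{n-1}$, projecting gives $\fai_{n,N}(c_{n,l}) = e + 2\fai_{n-1,N}(c_{n-1,l}) + r_{n-1,l+1,N}$, and $\fai_{n,N}(c_{n,l}+c_{n,l})$ is this walk written twice. By Notation~\ref{nota:more-than-l+1} the block $r_{n-1,l+1,N}$ contains no $c_{N,l}$, and the leading edge $e$ contains none either; hence $A$ (the last $c_{N,l}$ of the first $c_{n,l}$) sits in the second copy of $\fai_{n-1,N}(c_{n-1,l})$ of the first block, while $B$ (the first $c_{N,l}$ of the last $c_{n,l}$) sits in the first copy of $\fai_{n-1,N}(c_{n-1,l})$ of the last block. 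Consequently $s$ is the concatenation of the tail of $\fai_{n-1,N}(c_{n-1,l})$ after its last $c_{N,l}$, then $r_{n-1,l+1,N}$, then the leading edge $e$ of the last block, then the head of $\fai_{n-1,N}(c_{n-1,l})$ before its first $c_{N,l}$.

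To finish I would insert the head and tail of $\fai_{n-1,N}(c_{n-1,l})$. The tail is given by the last assertion of Lemma~\ref{lem:gap-calculation} with $n$ replaced by $n-1$, namely $r_{N,l+1,N} + r_{N+1,l+1,N} + \dotsb + r_{n-2,l+1,N}$, and the head is $(n-1-N)e$, obtained by peeling off one leading $e$ at each of the $n-1-N$ levels from $n-1$ down to $N$. Collecting the $r$-blocks and then the edges yields
\[ s = r_{N,l+1,N} + r_{N+1,l+1,N} + \dotsb + r_{n-1,l+1,N} + (n-N)e, \]
and the length formula $l(s) = (n-N) + \sum_{k=N}^{n-1} l(r_{k,l+1,N})$ follows by counting the $(n-N)$ unit edges together with the lengths of the $r$-blocks. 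The only point needing care is the bookkeeping of the previous paragraph --- which copies of $\fai_{n-1,N}(c_{n-1,l})$ carry $A$ and $B$, and the correct left-to-right assembly of tail, inner $r_{n-1,l+1,N}+e$, and head; since this is precisely the largest-gap identification of Lemma~\ref{lem:gap-calculation}, no genuinely new difficulty arises.
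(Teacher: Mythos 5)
Your proof is correct and takes essentially the same route as the paper: the paper offers no separate argument, stating only that the lemma follows ``from the proof of the above lemma,'' and your one-level expansion $\fai_{n,N}(c_{n,l}) = e + 2\fai_{n-1,N}(c_{n-1,l}) + r_{n-1,l+1,N}$, assembled from the tail $r_{N,l+1,N}+\dotsb+r_{n-2,l+1,N}$ (the last assertion of Lemma~\ref{lem:gap-calculation}), the junction $r_{n-1,l+1,N}+e$, and the head $(n-1-N)e$, is precisely the gap computation of Lemma~\ref{lem:gap-calculation} transcribed one index lower. You also correctly normalize the paper's typo $r_{N+1,l+t,N}$ to $r_{N+1,l+1,N}$.
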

\begin{nota}\label{nota:gap}
We denote $g_{n,l,N} = n - N + \sum_{k = N}^{n-1}l(r_{k,l+1,N})$.
Then, $g_{n,l,N}$ is the largest gap between occurrences of $c_{N,l}$ in $2c_{n,l}$.
\end{nota}
From this point, for $x \ne y$ with $x,y \in X\setminus \seb p \sen$, we present successive lemmas to check that\\ 
 $\limsup_{k \to \pinfty}d(f^k(x) ,f^k(y)) > 0$.

\begin{lem}
Let $x \ne y \in X$ be such that $\deg x = \deg y < \pinfty$.
Then, $\limsup_{k \to \pinfty}d(f^k(x),f^k(y)) > 0$.
\end{lem}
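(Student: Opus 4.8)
The plan is to argue by a dichotomy on $\limsup_{n \to \pinfty}\abs{\gap(u_n,v_n)}$, the quantity that measures how far apart $x$ and $y$ sit along their common circuits. Since $\deg x = \deg y = l < \pinfty$, Lemma~\ref{lem:degree-constant-upper} gives an $N_0$ with $\deg u_n = \deg v_n = l$, hence $u_n,v_n \in V(c_{n,l})$, for all $n \ge N_0$; thus $\gap(u_n,v_n)$ is defined for $n \ge N_0$. I fix once and for all a level $N \ge N_0$ large enough that $l(c_{N,l}) > 1$, a vertex $\tau \in V(c_{N,l}) \setminus \seb v_{N,0} \sen$, and its clopen cell $U(\tau) \in \Ucal_N$. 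As distinct cells of $\Ucal_N$ are disjoint compact sets, $\epsilon := \dist(U(\tau), X \setminus U(\tau)) > 0$. In each case the goal is to produce infinitely many times $k$ at which exactly one of $f^k(x),f^k(y)$ lies in $U(\tau)$, so that $d(f^k(x),f^k(y)) \ge \epsilon$.

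The bounded case is immediate. If $\limsup_n \abs{\gap(u_n,v_n)} < \pinfty$, then Lemma~\ref{lem:bounded-gap-on-the-same-orbit} produces a $d \in \Z$ with $f^d(x) = y$; since $x \ne y$ we have $d \ne 0$, and Lemma~\ref{lem:on-the-same-orbit-positive} gives $\limsup_k d(f^k(x),f^k(y)) > 0$ at once.

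The content lies in the unbounded case $\limsup_n \abs{\gap(u_n,v_n)} = \pinfty$, where I pass to levels $n$ along which $\gap(u_n,v_n) \to \pinfty$ and analyse the level-$N$ walk that both forward orbits follow while traversing $c_{n,l}$. By Lemma~\ref{lem:gap-between-CnCn} and Notation~\ref{nota:gap}, the occurrences of $c_{N,l}$ in $\fai_{n,N}(c_{n,l})$ --- equivalently, the visits to $\tau$ --- are separated by stretches assembled from the terms $r_{k,l+1,N}$ of Notation~\ref{nota:more-than-l+1}, none of which contains $c_{N,l}$; two adjacent copies are at distance at most $M(N) := \max_{1 \le j \le N} l(c_{N,j})$, while the largest stretch has length $g_{n,l,N} = n - N + \sum_{k=N}^{n-1} l(r_{k,l+1,N}) \to \pinfty$. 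Because $x$ and $y$ run along this walk with a mutual offset of $\gap(u_n,v_n)$ steps, once the long stretches dominate the offset one orbit can sit on a $c_{N,l}$-occurrence (thus at $\tau$) while the other is carried strictly inside a stretch, i.e.\ onto some $c_{N,j}$ with $j > l$ or onto $v_{N,0}$, which lie in cells of $\Ucal_N$ other than $U(\tau)$. This yields $\fai_{\infty,N}(f^k(x)) = \tau \ne \fai_{\infty,N}(f^k(y))$, hence $d(f^k(x),f^k(y)) \ge \epsilon$, and since the separating configuration recurs inside $\fai_{n,N}(c_{n,l})$ for arbitrarily large $n$, such times $k$ occur infinitely often.

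I expect the unbounded case to be the main obstacle, and inside it the delicate step is the offset bookkeeping: one must check that the level-$n$ displacement $\gap(u_n,v_n)$ really does push, under $\fai_{n,N}$, one orbit onto a $c_{N,l}$-boundary while the other remains interior to a $c_{N,l}$-free stretch, and --- crucially --- that these separating times accumulate at the single fixed level $N$ rather than drifting upward as $n$ grows. This is exactly what the explicit formula for $g_{n,l,N}$ together with the uniform constant $M(N)$ is meant to control: for $n$ large the $r$-stretches swamp both $M(N)$ and the portion of the offset that could realign the two visit patterns, forcing the missed visits to $\tau$.
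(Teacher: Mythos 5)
Your dichotomy on $\limsup_n\abs{\gap(u_n,v_n)}$ and your treatment of the bounded case coincide with the paper's proof: Lemma~\ref{lem:bounded-gap-on-the-same-orbit} followed by Lemma~\ref{lem:on-the-same-orbit-positive} settles that case, and your observation that $x \ne y$ forces $d \ne 0$ is correct. The genuine gap is in the unbounded case, and it is precisely the step you flagged as delicate. Your separating mechanism needs, inside the window where both orbits trace $\fai_{n,N}(c_{n,l})$ with mutual offset $\Delta_n = \gap(u_n,v_n)$, a $c_{N,l}$-free stretch \emph{longer than $\Delta_n$}, so that one orbit sits at $\tau$ while the other is carried strictly inside the stretch. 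But nothing in the construction bounds $\Delta_n$ by the available stretch lengths: $u_n, v_n$ are positions on $c_{n,l}$, so $\abs{\Delta_n}$ is controlled only by $l(c_{n,l})$, whereas every $c_{N,l}$-free stretch in $\fai_{n,N}(c_{n,l})$ has length at most $g_{n-1,l,N}$ (interior gaps) or $\sum_{k=N}^{n-1}l(r_{k,l+1,N})$ (the terminal tail). A short computation with the recursion $l(c_{n,l}) = 2 + 2\sum_{j=l}^{n-1}l(c_{n-1,j})$ shows these stretches can be a bounded fraction of $l(c_{n,l})$, so along your subsequence $\Delta_n$ may exceed \emph{all} of them for every $n$; your closing assertion that ``for $n$ large the $r$-stretches swamp \dots the offset'' is exactly what is unproved, and it can be false. (A smaller error: ``two adjacent copies are at distance at most $M(N)$'' is wrong as stated --- consecutive occurrences of $c_{N,l}$ are separated by gaps $g_{m,l,N}$ which are unbounded; $M(N)$ plays its role in the liminf Lemma~\ref{lem:xy-proximal}, not here.)

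The paper's proof avoids comparing anything to $\Delta_n$. Assuming $\gap(u_n,v_n) > 0$ unboundedly, the orbit of $y$ reaches the end of the level-$n$ window first, hence enters the terminal $c_{N,l}$-free tail of $\fai_{n,N}(c_{n,l})$ (the last statement of Lemma~\ref{lem:gap-calculation}), of length $A(n) = \sum_{k=N}^{n-1}l(r_{k,l+1,N})$; throughout this tail $\deg\fai_{\infty,N}(f^i(y)) \ge l+1$. Meanwhile $x$ still visits degree-$l$ vertices with spacing at most $B(n) = \sum_{k=N}^{n-2}l(r_{k,l+1,N})$. The decisive inequality is between these two \emph{offset-independent} quantities: $A(n) - B(n) = l(r_{n-1,l+1,N}) \to \pinfty$, so $x$ must hit a degree-$l$ vertex at some time $i_n \ge A(n) - B(n)$ inside $y$'s tail window, giving $\deg\fai_{\infty,N}(f^{i_n}(x)) = l \ne \deg\fai_{\infty,N}(f^{i_n}(y))$ at arbitrarily late times, hence separation by distinct cells of $\Ucal_N$. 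To repair your argument, replace the ``stretch versus offset'' comparison by this ``tail length versus recurrence spacing'' comparison; the rest of your framework (fixed level $N$, fixed $\tau$ or fixed degree test, positive distance between distinct clopen cells) then goes through.
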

\begin{proof}
Let $x \ne y \in X$ be such that $\deg x = \deg y < \pinfty$.
Let $\deg x = \deg y = l$,
 $x = (u_0,u_1,u_2,\dotsc)$, and $y = (v_0,v_1,v_2,\dotsc)$.
By Lemma \ref{lem:degree-constant-upper},
 there exists an $N \bni$ such that $\deg u_{n} = \deg v_{n}$ for all $n \ge N$.
We can take $N$ such that $N \gg l$,
and a $\gap(u_n,v_n)$ is defined for every $n \ge N$.
Suppose that $\limsup_{n \to \pinfty}\left|\gap(u_n,v_n)\right| < \pinfty$.
Then,
 by Lemma \ref{lem:bounded-gap-on-the-same-orbit}
 and Lemma \ref{lem:on-the-same-orbit-positive},
 it follows that $\limsup_{k \to \pinfty}d(f^k(x),f^k(y)) > 0$.
Therefore, we assume that
 $\limsup_{n \to \pinfty} \left|\gap(u_n,v_n)\right| = \pinfty$.
Broadly, we shall show that
 one of the two orbits enters a domain of degree larger than $l$,
 and, after a long time, another orbit still takes the vertices of degree $l$.
Let $n \gg N$.
By the definition of $N$, it follows that
 $\fai_{n,N}(u_{n}) \in V(c_{N,l})$
 and $\fai_{n,N}(v_{n}) \in V(c_{N,l})$.
By Lemma \ref{lem:gap-calculation}, both $\remn(u_n) \to \pinfty$ and 
 $\remn(v_n) \to \pinfty$ hold.
Let $K(n) = \min \seb \remn(u_n), \remn(v_n) \sen$.
For $0 \le i \le K(n)$,
 both $\fai_{\infty,n}(f^i(x))$ and $\fai_{\infty,n}(f^i(y))$ follow the path on $c_{n,l}$ until one of them reaches the end.
Without loss of generality, we assume that $\gap(u_n,v_n) > 0$ for infinitely many $n$,
 and we assume that we can take an arbitrarily large $n$
 with arbitrarily large $\gap(u_n,v_n)>0$.
Thus, $\fai_{\infty,n}(f^i(y))$ follows the last $c_{N,l}$ first.
To catch the timing of this last $c_{N,l}$, we take an $L(n)  > 0$ such that
 $\deg(\fai_{\infty,n}(f^{L(n)-1}(y))) = l$ and, for $L(n) \le i \le K(n)$,
 $\deg(\fai_{\infty,n}(f^i(y))) \ge l+1$.
Let A(n) := $\sum_{k = N}^{n-1}l(r_{k,l+1,N}) = K(n) - L(n)$.
By Lemma \ref{lem:gap-calculation}, the gap between occurrences of $c_{N,l}$
 is at most $B(n) := \sum_{k = N}^{n-2}r_{k,l+1,N}$.
Therefore, $\fai_{\infty,N}(f^i(x))$ follow $c_{N,l}$ for at least one $i$ 
 with $L(n) \le i \le K(n)$.
We have to show that we can take $i$ with $L(n) \le i \le K(n)$
 such that $\deg \fai_{\infty,N}(f^i(x)) = l$
 is arbitrarily large.
We have:
\[
\begin{array}{lll}
A(n) - B(n) & = & \sum_{k = N}^{n-1}l(r_{k,l+1,N})-\sum_{k = N}^{n-2}r_{k,l+1,N}\\
 & = & l(r_{n-1,l+1,N})\\
 & \to & \pinfty \text{ as } n \to \pinfty.
\end{array}
\]
Let $L(n) \le  i_n \le K(n)$ be the largest $i$
 with $L(n) \le  i \le K(n)$ and $\deg \fai_{\infty,N}(f^i(x)) = l$.
Then, it follows that $i_n + B(n) > K(n) > A(n)$.
Thus, $i_n \ge A(n) - B(n)$ is unbounded as $n \to \pinfty$.
Therefore, $\deg \fai_{\infty,N}(f^i(x)) \ne \deg \fai_{\infty,N}(f^i(y))$
 for infinitely large $i > 0$.
This concludes the proof.
\end{proof}
By Notation \ref{nota:degree-of-each-x}, for $x \ne p$,
 we have $\deg x < \pinfty$.
\begin{lem}
Let $x \ne y \in X$ be distinct from $p$ and $\deg x +2 \le \deg y < \pinfty$.
Then, it follows that 
\[\limsup_{k \to \pinfty}d(f^k(x),f^k(y)) > 0.\]
\end{lem}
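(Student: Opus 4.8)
The plan is to exhibit a single fixed level $N$ and infinitely many times $k$ at which $f^k(x)$ and $f^k(y)$ lie in distinct members of the clopen partition $\Ucal_N$. Since $\Ucal_N$ is a finite partition of $X$ into clopen (hence compact) sets, the quantity $\delta_N := \min \seb \dist(U(v),U(v')) \mid v \ne v' \in V_N \sen$ is strictly positive, and whenever $\fai_{\infty,N}(f^k(x)) \ne \fai_{\infty,N}(f^k(y))$ we have $d(f^k(x),f^k(y)) \ge \delta_N$. Thus producing infinitely many such $k$ immediately yields $\limsup_{k \to \pinfty} d(f^k(x),f^k(y)) \ge \delta_N > 0$.

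The separation will come from a gap in degrees. Write $l = \deg x$ and $l' = \deg y$, so that $l + 2 \le l' < \pinfty$. First I would record that the orbit of $y$ never drops below degree $l'$ at any fixed level: by Lemma \ref{lem:constant-degree} we have $\deg f^k(y) = l'$ for every $k$, and since $\deg f^k(y)$ is by definition the minimum of the degrees of the coordinates of $f^k(y)$, the level-$N$ coordinate satisfies $\deg \fai_{\infty,N}(f^k(y)) \ge l'$ for all $k$ and all $N$. In particular the level-$N$ orbit of $y$ avoids every vertex of degree $l+1$, because $l+1 < l'$. Second, I would fix $N$ large (at least the threshold for $x$ furnished by Lemma \ref{lem:degree-constant-upper}, and with $N \ge l+1$), and reuse the mechanism inside the proof of Lemma \ref{lem:x-p-limsup-positive}: there it is shown that $\fai_{n-1,N}(2c_{n-1,l+1})$ winds around $c_{N,l+1}$ about $2^n$ times, so that a fixed vertex $\tau \in V(c_{N,l+1}) \setminus \seb v_{N,0} \sen$ is visited by the level-$N$ orbit of $x$ infinitely often, i.e.\ $\fai_{\infty,N}(f^k(x)) = \tau$ for infinitely many $k$. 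Note $\deg \tau = l+1$.

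Combining the two facts finishes the argument: at each of the infinitely many times $k$ with $\fai_{\infty,N}(f^k(x)) = \tau$, the vertex $\fai_{\infty,N}(f^k(y))$ has degree at least $l' \ge l+2 > l+1 = \deg \tau$, so it cannot equal $\tau$; hence $f^k(x)$ and $f^k(y)$ sit in different members of $\Ucal_N$ and are at distance at least $\delta_N$. I expect the only genuine work to lie in the second fact, namely carefully extracting from the proof of Lemma \ref{lem:x-p-limsup-positive} that a single degree-$(l+1)$ vertex at a \emph{fixed} level is hit infinitely often, rather than merely that the orbit stays away from $p$. Everything else is bookkeeping with the degree function of Lemma \ref{lem:degree-non-increasing}, together with the observation, which is exactly what the hypothesis $\deg x + 2 \le \deg y$ guarantees, that a gap of size at least two leaves the degree-$(l+1)$ vertices of the orbit of $x$ entirely inaccessible to $y$; the borderline case $\deg y = \deg x + 1$ genuinely requires the finer analysis carried out separately.
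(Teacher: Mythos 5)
Your proposal is correct and follows essentially the same route as the paper: fix a level $N$, use the winding of $2c_{n-1,l+1}$ inside $\fai_{n-1}(c_{n,l})$ to show the level-$N$ orbit of $x$ visits a fixed vertex of $c_{N,l+1}\setminus\seb v_{N,0}\sen$ infinitely often, and observe that the level-$N$ orbit of $y$ only meets $e$ and cycles of degree at least $l' \ge l+2$, so the two orbits are separated by the positive mesh of $\Ucal_N$ at those times. The only cosmetic difference is that you justify the avoidance via Lemma \ref{lem:constant-degree} together with the definition of $\deg$ as a minimum over coordinates (giving that every invocation of Lemma \ref{lem:constant-degree}, which the paper remarks is otherwise unused, is perfectly valid here), whereas the paper reads the same fact directly off the form of $\fai_{n,N}(c_{n,l'})$.
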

\begin{proof}
Let $x = (u_0,u_1,u_2,\dotsc)$ and $y = (v_0,v_1,v_2,\dotsc)$.
Let $\deg x = l$ and $\deg y = l'$.
Then, it follows that $l+2 \le l'$.
As before, fix a large $N > 0$ such that
 $\deg u_n = l$ and $\deg v_n = l'$ for all $n \ge N$.
First, we show that the sequence $\fai_{\infty,N}(f^i(x))$ with $i \ge 0$
 treads $c_{N,l+1}$ infinitely many times.
For each $n > N$, we get
\[ \fai_{n-1}(c_{n,l}) = e + 2c_{n-1,l} + 2c_{n-1,l+1} + r_{n-1,l+2,n-1}.\]
In the above expression, for large enough $n$, $u_{n-1}$ lies in $2c_{n-1,l}$.
Therefore, $\fai_{\infty,N}(f^i(x))$ with $i \ge 0$ passes $2c_{n-1,l+1}$;
it follows that it passes $c_{N,l+1}$ at least $2^{n-N}$ times.
Because $n$ is arbitrarily large, it passes $c_{N,l+1}$ infinitely many times.
Next, note that $\fai_{\infty,N}(f^i(y))$ with $i \ge 0$ passes only
$e$ or $c_{N,m}$ with $l+1 < l' \le m \le N$.
The conclusion is obvious.
\end{proof}
As in Notation \ref{nota:gap}, in $\fai_{m,N}(2c_{m,l})$, the largest gap
 between occurrences of $c_{N,l}$ is calculated.
In the following lemmas, we also consider the pattern of {\it the occurrence of gaps}. 
\begin{lem}\label{lem:usual-gap-occurrence}
Let $n \gg N > l$.
In $\fai_{n,N}(c_{n,l})$,
 whenever there exist two occurrences of a gap in $c_{N,l}$ with length $g_{m,l,N}$,
 there exists a gap in $c_{N,l}$ of length $g_{m',l,N}$ between them, where $m' > m$.
\end{lem}
\begin{proof}
We first make the following calculation:
\[
\begin{array}{lll}
\fai_{n,n-2}(c_{n,l}) & = & \fai_{n-2}(e + 2c_{n-1,l} + r_{n-1,l+1,n+1})\\
 & = & e + 2\fai_{n-2}(c_{n-1,l}) + \fai_{n-2}(r_{n-1,l+1,n-1})\\
 & = & e + 2(e + 2c_{n-2,l} +r_{n-2,l+1,n-2}) + r_{n-1,l+1,n-2}\\
 & = & e + e + 2c_{n-2,l} +r_{n-2,l+1,n-2} + e + 2c_{n-2,l} +r_{n-2,l+1,n-2}\\
 &   & + r_{n-1,l+1,n-2}.
\end{array}
\]
Let us project the above expression by $\fai_{n-2,N}$.
Then, we find the occurrence of gaps as follows:
\[ \dotsb (\text{gap of } g_{n-2,l,N}) \dotsb
   (\text{gap of } g_{n-1,l,N}) \dotsb (\text{gap of } g_{n-2,l,N}) \dotsb.
\]
By an easy induction, we obtain the conclusion.
\end{proof}
\begin{lem}\label{lem:tail-gap-occurrence}
Let $n \gg N \gg l$.
In $\fai_{n,N}(c_{n,l})$, even after all occurrences of $c_{N,l}$,
 there exists an occurrence of $c_{N,l+1}$.
We write $\fai_{n,N}(c_{n,l}) = \dotsb + c_{N,l} + s$ when there is no occurrence of $c_{N,l}$
in $s$.
Then, in $s$, there exist two gaps in $c_{N,l+1}$ of length $g_{n-1,l+1,N}$
 such that all gaps in $c_{N,l+1}$ between them have lengths of  less than $g_{n-1,l+1,N}$.
Furthermore, if we take $n$ to be sufficiently large, after the last occurrence of $c_{N,l}$,
 there exists an arbitrarily large interval before the last occurrence of two such gaps
 of length $g_{n-1,l+1,N}$.
\end{lem}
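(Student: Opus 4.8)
The plan is to reduce everything to the explicit form of the suffix $s$. By the last statement of Lemma~\ref{lem:gap-calculation}, $\fai_{n,N}(c_{n,l})$ has the shape $\dotsb + c_{N,l} + s$ with
\[ s = r_{N,l+1,N} + r_{N+1,l+1,N} + \dotsb + r_{n-1,l+1,N}, \]
and I would track only the occurrences of $c_{N,l+1}$ inside $s$. Writing $r_{k,l+1,N} = 2\fai_{k,N}(c_{k,l+1}) + r_{k,l+2,N}$ and recalling from Notation~\ref{nota:more-than-l+1} that $r_{k,l+2,N}$ contains no $c_{N,l+1}$ — and more generally that $\fai_{k,N}(c_{k,j})$ avoids $c_{N,l+1}$ for $j \ge l+2$ — every occurrence of $c_{N,l+1}$ in $s$ lies in one of the blocks $2\fai_{k,N}(c_{k,l+1})$, $N \le k \le n-1$. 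Since $r_{N,l+1,N}$ begins with $2c_{N,l+1}$, the first assertion is immediate.

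For the gap analysis set $P_k := \fai_{k,N}(c_{k,l+1})$, so the content-bearing blocks are $2P_k = P_k + P_k$. Since $\fai_{k-1}(c_{k,l+1}) = e + 2c_{k-1,l+1} + r_{k-1,l+2,k-1}$, the occurrences of $c_{N,l+1}$ inside one copy of $P_k$ all come from the $2c_{k-1,l+1}$ part, so by Notation~\ref{nota:gap} the largest gap between consecutive $c_{N,l+1}$ inside a single $P_k$ is $g_{k-1,l+1,N}$; by Lemma~\ref{lem:gap-between-CnCn}, read with $l$ replaced by $l+1$, the bridging gap between the two copies of $P_k$ in $2P_k$ equals $g_{k,l+1,N}$. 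As $g_{m,l+1,N}$ is strictly increasing in $m$, the globally largest gap occurring in $s$ is $g_{n-1,l+1,N}$, realised by the last block $2P_{n-1}$.

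The crux, and the step I expect to give the most trouble, is to exhibit a \emph{second} gap of length exactly $g_{n-1,l+1,N}$: the bridging gap inside $2P_{n-1}$ supplies only one. The idea is that the gap \emph{entering} $2P_{n-1}$ — from the last $c_{N,l+1}$ of the preceding block $r_{n-2,l+1,N}$ to the first $c_{N,l+1}$ of $2P_{n-1}$ — is again $g_{n-1,l+1,N}$. Denoting by $T_k = r_{N,l+2,N} + \dotsb + r_{k-1,l+2,N}$ the tail of $P_k$ after its last $c_{N,l+1}$ (furnished by Lemma~\ref{lem:gap-calculation} at level $l+1$), this entering gap equals $T_{n-2} + r_{n-2,l+2,N} + (\text{prefix of } P_{n-1})$, whereas the bridging gap equals $T_{n-1} + (\text{prefix of } P_{n-1})$; the telescoping identity $T_{n-2} + r_{n-2,l+2,N} = T_{n-1}$ makes them coincide. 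The delicate point is the bookkeeping of the $e_{N,0}$-loops, which must be tracked exactly so that the two gaps agree on the nose and not merely to leading order. Granting this, the two gaps of length $g_{n-1,l+1,N}$ bracket the first copy of $P_{n-1}$, and every gap strictly between them is an internal gap of $P_{n-1}$, hence at most $g_{n-2,l+1,N} < g_{n-1,l+1,N}$; this yields the middle clause.

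Finally, both of these extremal gaps sit inside the last summand $r_{n-1,l+1,N}$, so the portion of $s$ preceding the first of them has length at least $l(r_{N,l+1,N}) + \dotsb + l(r_{n-2,l+1,N})$. For fixed $N$ and $l$ this tends to $\pinfty$ as $n \to \pinfty$, so taking $n$ large makes the interval between the last $c_{N,l}$ and this terminal pair of $g_{n-1,l+1,N}$-gaps arbitrarily long, which is the last assertion. The hypotheses $n \gg N \gg l$ are used only to guarantee that $c_{N,l+1}$ exists and that at least the two blocks $2P_{n-2}, 2P_{n-1}$ are present to form the entering gap.
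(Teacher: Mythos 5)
Your proposal is correct and in substance identical to the paper's own proof: the two gaps of length $g_{n-1,l+1,N}$ you exhibit — the gap entering the block $2\fai_{n-1,N}(c_{n-1,l+1})$ and the bridging gap inside it, bracketing the first copy of $\fai_{n-1,N}(c_{n-1,l+1})$ — are exactly the gaps the paper reads off from lines (1) and (2) of its explicit two-level expansion of $\fai_{n,n-2}(c_{n,l})$. Your telescoping identity $T_{n-2}+r_{n-2,l+2,N}=T_{n-1}$ is just a cleaner packaging of that expansion, and it in fact resolves the $e_{N,0}$-bookkeeping you flagged, since it holds at the level of walks and not merely of lengths.
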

\begin{proof}
We can calculate that:
\[
\begin{array}{lll}
\fai_{n,n-2}(c_{n,l}) & = & \fai_{n-2}(e + 2c_{n-1,l} + 2c_{n-1,l+1} + r_{n-1,l+2,n+1})\\
 & = & \dotsb +  \fai_{n-2}(c_{n-1,l})
              + 2\fai_{n-2}(c_{n-1,l+1}) + \fai_{n-2}(r_{n-1,l+2,n-1})\\
 & = & \dotsb + c_{n-2,l} + 2c_{n-2,l+1} + r_{n-2,l+2,n-2}\\
 &   & \hphantom{\dotsb} + 2(e + 2c_{n-2,l+1} + r_{n-2,l+2,n-2}) + r_{n-1,l+2,n-2}\\
 & = & \dotsb + c_{n-2,l} +2c_{n-2,l+1} + r_{n-2,l+2,n-2}\\
 &   & \hphantom{\dotsb} + e + 2c_{n-2,l+1} + r_{n-2,l+2,n-2}\\
 &   & \hphantom{\dotsb} + e + 2c_{n-2,l+1} + r_{n-2,l+2,n-2} + r_{n-1,l+2,n-2}\\
 & = & \dotsb + c_{n-2,l} + c_{n-2,l+1} \\
 & &  \hphantom{\dotsb}+ c_{n-2,l+1} + r_{n-2,l+2,n-2} + e + c_{n-2,l+1}
 \quad \cdots \cdots \quad \text{(1)}\\
 & &  \hphantom{\dotsb}+ c_{n-2,l+1} + r_{n-2,l+2,n-2} + e + c_{n-2,l+1}
 \quad \cdots \cdots \quad\text{(2)}\\
 & &  \hphantom{\dotsb}+ c_{n-2,l+1} + r_{n-2,l+2,n-2} + r_{n-1,l+2,n-2}
\end{array}
\]
We consider the projection by $\fai_{n-2,N}$ of the above expression.
Then, in lines (1) and (2),
 there exists a gap in $c_{N,l+1}$ of length $g_{n-1,l+1,N}$,
 and the lengths of the gaps in $c_{N,l+1}$ between them are at most $g_{n-2,l+1,N}$.
This concludes the first part of the claim.
Because $l(c_{n-2,l+1}) \to \pinfty$ as $n \to \pinfty$,
 the last claim is obvious from the above expression.
\end{proof}
\begin{lem}
Let $x \ne y \in X$ be distinct from $p$, and $\deg x +1 = \deg y < \pinfty$.
Then, it follows that
\[\limsup_{k \to \pinfty}d(f^k(x),f^k(y)) > 0.\]
\end{lem}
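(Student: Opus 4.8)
The plan is to argue by contradiction, ruling out asymptoticity of $x$ and $y$ at level $N$ by means of the gap structure recorded in Lemmas \ref{lem:usual-gap-occurrence} and \ref{lem:tail-gap-occurrence}. Write $\deg x = l$ and $\deg y = l+1$, and by Lemma \ref{lem:degree-constant-upper} fix a large $N$ (with $N > l+1$) for which $\deg u_n = l$ and $\deg v_n = l+1$ for all $n \ge N$. Suppose toward a contradiction that $\limsup_{k \to \pinfty} d(f^k(x),f^k(y)) = 0$. Since the cells $U(v)$ $(v \in V_N)$ are clopen and finite in number, this forces a $K_0$ with $\fai_{\infty,N}(f^k(x)) = \fai_{\infty,N}(f^k(y))$ for all $k \ge K_0$; that is, the two forward orbits are synchronized at level $N$ from time $K_0$ on.

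First I would dispose of the degree-$l$ excursions of $x$. Because $\deg f^k(y) = l+1$ is the minimum of the degrees of the coordinates of $f^k(y)$, every coordinate of $f^k(y)$, and in particular $\fai_{\infty,N}(f^k(y))$, has degree at least $l+1$; thus $y$ never visits a vertex of $c_{N,l}$. Consequently, if $\fai_{\infty,N}(f^k(x))$ lay in $V(c_{N,l}) \setminus \seb v_{N,0} \sen$ for some $k \ge K_0$, synchronization would place $y$ on a degree-$l$ vertex, which is impossible; hence for all $k \ge K_0$ both orbits lie in the circuits of degree $\ge l+1$, and they visit the vertices of $c_{N,l+1}$ at exactly the same times and at the same vertices. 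In particular, the two forward orbits induce the same sequence of gaps between consecutive occurrences of $c_{N,l+1}$ at level $N$ beyond time $K_0$.

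The contradiction then comes from comparing these gap sequences. On the one hand, $y$ is a degree-$(l+1)$ orbit, so any bounded window of its level-$N$ itinerary lies inside a single walk $\fai_{n,N}(c_{n,l+1})$ for $n$ large (here $\remn \to \pinfty$); applying Lemma \ref{lem:usual-gap-occurrence} with $l+1$ in place of $l$, between any two gaps of $c_{N,l+1}$ of a common length $g_{m,l+1,N}$ there must occur a strictly longer gap $g_{m',l+1,N}$ with $m' > m$. Thus $y$'s $c_{N,l+1}$-gap sequence never displays two gaps of equal length $g_{m,l+1,N}$ separated only by strictly shorter gaps. On the other hand, $x$ is a degree-$l$ orbit, so its level-$N$ itinerary is governed by the walks $\fai_{n,N}(c_{n,l})$, and by Lemma \ref{lem:tail-gap-occurrence} the tail of each such walk contains precisely such a forbidden configuration: two gaps of $c_{N,l+1}$ of equal length $g_{n-1,l+1,N}$ with all intervening gaps strictly shorter. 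By the last assertion of that lemma this configuration is reached after an arbitrarily long initial stretch, hence along the forward orbit of $x$ at arbitrarily large times; choosing one past $K_0$ and transporting it to $y$ by synchronization contradicts the structure forced by Lemma \ref{lem:usual-gap-occurrence}. Therefore $\limsup_{k \to \pinfty} d(f^k(x),f^k(y)) > 0$.

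The main obstacle, and the step I would spend the most care on, is exactly the regime in which $x$ never reaches degree $l$ at level $N$ after time $K_0$, so that it cannot be separated from $y$ by the elementary degree argument used in the $\deg x + 2 \le \deg y$ case. Here one must extract a genuinely combinatorial invariant that survives synchronization, namely the precise pattern of $c_{N,l+1}$-gaps, and verify that the hierarchical ``record gaps strictly increase'' property of the degree-$(l+1)$ orbit $y$ (Lemma \ref{lem:usual-gap-occurrence}) is incompatible with the ``two equal record gaps'' tail configuration of the degree-$l$ orbit $x$ (Lemma \ref{lem:tail-gap-occurrence}). Making ``the same sequence of gaps'' rigorous across the concatenations and junctions that both orbits traverse, and confirming that the forbidden configuration is actually visited at arbitrarily large times, is the crux of the argument.
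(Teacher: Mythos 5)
Your proposal is correct and takes essentially the same route as the paper's proof: after fixing $N$ via Lemma \ref{lem:degree-constant-upper}, both arguments come down to playing the ``two equal gaps of length $g_{n-1,l+1,N}$ with only shorter gaps between them'' configuration in the tail of $\fai_{n,N}(c_{n,l})$ (Lemma \ref{lem:tail-gap-occurrence}, governing $x$) against the impossibility of that configuration inside $\fai_{n',N}(c_{n',l+1})$ (Lemma \ref{lem:usual-gap-occurrence} with $l+1$ in place of $l$, governing $y$). The only difference is presentational: you run it as a contradiction via eventual synchronization of the level-$N$ itineraries, which automatically absorbs the preliminary cases the paper treats separately ($x$ meeting $c_{N,l}$ infinitely often, or $y$ meeting $c_{N,l+1}$ only finitely often) and replaces the paper's shift by $f^{i_0}$ with your time $K_0$.
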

\begin{proof}
Let $x \ne y \in X$ be distinct from $p$.
Let $\deg x  = l$.
Then, $\deg y = l+1$.
Let $x = (u_0,u_1,u_2,\dotsc)$ and $y = (v_0,v_1,v_2,\dotsc)$.
As we have already shown, there exists an $N > 0$
 such that $\deg u_{n} = l$ and $\deg v_{n} = l+1$ for all $n \ge N$.
The sequence $\fai_{\infty,N}(f^i(x))$ with $i \ge 0$ passes through only
$e$ or $c_{N,m}$ with $l \le m \le N$,
 and the sequence $\fai_{\infty,N}(f^i(y))$ with $i \ge 0$ passes through only
 $e$ or $c_{N,m}$ with $l + 1 \le m \le N$.
Therefore, if $\fai_{\infty,N}(f^i(x))$ with $i \ge 0$
 passes $c_{N,l}$ infinitely many times, then the conclusion is obvious.
Therefore, we assume that $\fai_{\infty,N}(f^i(x))$ with $i \ge 0$
 treads $c_{N,l}$ only finitely many times.
Note that $\fai_{\infty,N}(f^i(x))$ with $i \ge 0$ passes $c_{N,l+1}$
 infinitely many times.
On the other hand, if $\fai_{\infty,N}(f^i(y))$ with $i \ge 0$ only take values 
 on $c_{N,l+1}$ a finite number of times, then the conclusion is again obvious.
Therefore, we assume that $\fai_{\infty,N}(f^i(y))$ with $i \ge 0$ treads
 on $c_{N,l+1}$ infinitely many times.
%
%
Because $\fai_{n,N}(c_{n,l})$ contains $c_{N,l}$, there is some fixed $i_0 \in \Z$
 for which $\fai_{\infty,N}(f^{i_0-1}(x))$ becomes the last passage on
 $c_{N,l}\setminus \seb v_{N,0} \sen$
 and $\fai_{\infty,N}(f^{i_0}(x)) = v_{N,0}$.
Therefore, shifting $x$ and $y$ by $f^{i_0}$,
 we assume that $\fai_{\infty,N}(f^i(x))$ with $i \ge 0$ does not pass $c_{N,l}$.
For arbitrarily large $K > 0$, taking a large $n > N$,
 $\fai_{\infty,n}$ projects the orbit $f^i(x)$ with $0 \le i \le K$
 onto a path of $c_{n,l}$.
Therefore, for every gap in $c_{N,l+1}$, there exists an $n > N$ such that
 the gap is seen in $\fai_{n,N}(c_{n,l})$.
By Lemma \ref{lem:tail-gap-occurrence}, as $\fai_{\infty,N}(f^i(x))$ $(i = 0,1,\dotsc)$
 proceeds, there exist a couple of gaps in $c_{N,l+1}$ of length $g_{n-1,l+1,N}$,
 between which no larger gaps occur.
Furthermore, this occurs for arbitrarily large $i > 0$ if $n$ is large enough.
On the other hand, for arbitrarily large $K > 0$, taking a large $n > N$,
 $\fai_{\infty,n}$ projects the orbit $f^i(y)$ with $0 \le i \le K$
 onto a path of $c_{n,l+1}$.
By Lemma \ref{lem:usual-gap-occurrence},
 if $\fai_{\infty,N}(f^i(y))$ with $i > 0$ encounters a couple of gaps in
 $c_{N,l+1}$ of length $g_{n-1,l+1,N}$, a gap in $c_{N,l+1}$ of larger length
 must exist between them. 
Therefore, we obtain the desired conclusion.
\end{proof}
By proving the lemma above,
 we have shown that $(X,f)$ is completely scrambled.
The next lemma proves that $(X,f)$ is topologically transitive.
\begin{lem}
There exists an $x_0$ such that $\seb f^i(x_0) ~|~i \bni \sen$ is dense in $X$.
\end{lem}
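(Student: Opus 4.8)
The plan is to establish topological transitivity in its open-set form and then invoke the Baire category theorem, rather than to write $x_0$ down by hand. A direct construction is awkward here, because the transitive point is forced to be ``generic'': any point lying near the entrance of $c_{m,1}$ for large $m$ projects, at a fixed lower level, to a long initial sojourn at the hub $v_{N,0}$ (the nested leading loops $e$), so naive limits of such points collapse to the fixed point $p$ and are useless.

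First I would record that $\seb U(v) \mid v \in V_n,\ n \bni \sen$ is a countable base of clopen sets for $X$, coming from the product topology on $V_{\Gcal}$. Hence $\seb f^i(x_0) \mid i \bni \sen$ is dense iff it meets every $U(v)$, and the set of such $x_0$ is $\bigcap_{v}\bigcup_{i \ge 0} f^{-i}(U(v))$, a $G_\delta$. By the Baire category theorem (for the compact metric space $X$) it suffices to show each $\bigcup_{i\ge 0} f^{-i}(U(v))$ is dense, and for that it suffices to prove \emph{open transitivity}: for all $N$ and all $a,b \in V_N$ there is $t \ge 0$ with $f^t(U(a)) \cap U(b) \noemp$; equivalently, a single point $x \in U(a)$ with $f^t(x) \in U(b)$.

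To produce such a point I would work inside one long circuit at a high level. Fix $N$ and $a,b \in V_N$, and choose $m' \gg N$. Since $\fai_{m'-1}(c_{m',1}) = e + 2c_{m'-1,1} + \dotsb + e$ contains $c_{m'-1,1}$ twice \emph{consecutively}, an induction on the bonding formulas (of the kind carried out in Lemma~\ref{lem:gap-calculation}) shows that $\fai_{m',N}(c_{m',1})$ contains two consecutive copies of the covering block $\fai_{N+1,N}(c_{N+1,1}) = e + 2c_{N,1} + 2c_{N,2} + \dotsb + 2c_{N,N} + e$. Each block traverses every circuit of $G_N$, so their concatenation realizes the circuit indices in the order $1,1,2,2,\dotsc,N,N,1,1,2,2,\dotsc,N,N$. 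Consequently, for the given $a,b$ there are positions $p_a < p_b$ along $c_{m',1}$, corresponding to internal (non-hub) vertices $\xi_a,\xi_b$ of $c_{m',1}$ with $\fai_{m',N}(\xi_a)=a$ and $\fai_{m',N}(\xi_b)=b$ (if $a$ or $b$ is $v_{N,0}$ one uses a hub-passage vertex of $c_{m',1}$, still internal). Now take any $x \in U(\xi_a)$, nonempty since $\fai_{\infty,m'}$ is onto $V_{m'}$. As $\xi_a,\dotsc,\xi_b$ are internal vertices of $c_{m',1}$, each has a unique outgoing edge, so the level-$m'$ itinerary of $x$ is \emph{forced} to run along $c_{m',1}$ from $\xi_a$ to $\xi_b$ without meeting $v_{m',0}$; hence $f^{p_b-p_a}(x) \in U(\xi_b) \subseteq \fai_{\infty,N}^{-1}(b) = U(b)$, giving $t = p_b - p_a \ge 0$.

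The main obstacle is precisely this realization step. Unlike the hub $v_{m',0}$, whose outgoing edge is dictated by the finer coordinates (the coupling phenomenon in the construction, whereby entering $c_{m,1}$ is tied to entering $c_{m+1,1}$), an internal vertex of $c_{m',1}$ has a forced continuation, and this is exactly what lets a single genuine point realize the prescribed connection $a \to b$. The remaining ingredients are routine: that two consecutive covering blocks really occur in $\fai_{m',N}(c_{m',1})$, that every pair $a,b$ (including the hub) appears at internal positions in the required order, and the standard Baire argument above. Finally, density for $i \bni$ rather than $i \ge 0$ is immaterial, since one may replace $x_0$ by $f(x_0)$.
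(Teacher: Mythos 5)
Your proof is correct, but it takes a genuinely different route from the paper's. The paper constructs the transitive point $x_0$ explicitly: it threads a compatible sequence $u_N, u_{N+1}, \dotsc$ with $u_{n+1}$ chosen as the \emph{first} vertex of $c_{n+1,1}$ projecting to $u_n$, so that the forward itinerary of $x_0$ at level $n+1$ runs from near the start of $c_{n+1,1}$ all the way to $v_{n+1,0}$, sweeping the second copy of $c_{n,1}$ in $\fai_n(c_{n+1,1}) = e + 2c_{n,1} + \dotsb$ and hence, one level further down, every vertex of $G_{n-1}$; this incidentally shows your opening motivation is off the mark -- a direct construction is not awkward here, because choosing first preimages keeps the coordinates coherent and no limiting procedure (with its collapse-to-$p$ danger) is ever invoked. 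You instead run the standard Baire-category reduction: the $U(v)$ form a countable clopen base, transitive points are $\bigcap_v \bigcup_{i \ge 0} f^{-i}(U(v))$, and density of each union follows from open transitivity, which you verify by locating an ordered pair of occurrences of $a$ then $b$ inside two consecutive copies of the covering block $\fai_N(c_{N+1,1}) = e + 2c_{N,1} + \dotsb + 2c_{N,N} + e$ within $\fai_{m',N}(c_{m',1})$ (supplied by the $2c_{N+1,1}$ in $\fai_{N+1}(c_{N+2,1})$), and then using the key structural fact that internal vertices of a circuit have a unique outgoing edge, so the level-$m'$ itinerary of any $x \in U(\xi_a)$ is forced along $c_{m',1}$ to $\xi_b$. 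This forcing is the same mechanism the paper uses tacitly when it asserts that $\fai_{\infty,n+1}(f^i(x_0))$ follows the path $(u_{n+1},\dotsc,v_{n+1,0})$. What each approach buys: the paper's argument is shorter and exhibits a concrete transitive point (of degree $1$, riding the circuits $c_{n,1}$ at every level); yours is slightly longer and leaves the routine common-refinement step ($a$ and $b$ at different levels) implicit, but it proves more, namely that the transitive points form a residual subset of $X$, and it isolates open transitivity as a reusable statement independent of any particular thread.
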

\begin{proof}
Fix an arbitrary $N > 0$.
In our notation, $c_{N,1} = (v_{N,1,0},v_{N,1,1},\dots,v_{N,1,l(N,1)} = v_{N,0})$.
Let $u_N = v_{N,1,1}$.
It follows that $\fai_{N-1}(u_N) = v_{N-1,0}$.
Because $\fai_n(c_{n+1,l}) = e + 2c_{n,l} + \dotsb$ for all $n > 0$,
 we get $\fai_N(v_{N+1,1,2}) = v_{N,1,1} = u_N$.
In this way, if $u_n$ is defined, then $u_{n+1}$ is defined as the
 first occurrence of $u \in V(c_{n+1,1})$ such that $\fai_n(u) = u_n$.
We define 
 $x_0 := (v_{0,0},v_{1,0},\dotsc,v_{N-1,0},u_N, u_{N+1}, \dots)$.
Let $n \gg N$ be arbitrarily large.
Then, $\fai_{\infty,n+1}(f^i(x_0))$ with $i > 0$ follows
 a path $(u_{n+1},\dots,v_{n+1,0})$ in $c_{n+1,1}$.
Because $\fai_n(c_{n+1,1})$ winds around $c_{n,1}$ twice,
 $\fai_{\infty,n+1}(f^i(x_0))$ with $i > 0$ passes all vertices of $c_{n,1}$.
It is obvious that $\fai_{n-1}(c_{n,1})$ passes all vertices of $G_{n-1}$.
Because $n$ is arbitrary, the conclusion is obvious.
\end{proof}
The following lemma shows that $(X,f)$ is not locally equicontinuous.
\begin{lem}\label{lem:does-not-have-equicontinuity-points}
Let $x \in X$.
Then,  for every sufficiently large $n > N > 0$, 
 we have some $v \in V(G_n)$ and $i_n \in \Z$ such that, for $y_n = f^{i_n}(x)$,
 it follows that $x,y_n \in U(v)$ and there exists an $i > 0$
 with $\fai_{\infty,N}(f^i(x)) \ne \fai_{\infty,N}(f^i(y_n))$.
\end{lem}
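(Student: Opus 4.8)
The plan is to prove the statement for $x \ne p$. First I would dispose of $x = p$: there $f^{i_n}(p) = p$, so no separating time can exist, but this is harmless, since $p$ is fixed, its orbit closure is the single point $\seb p \sen$, and equicontinuity there is automatic; hence $p$ is irrelevant to local equicontinuity and may be excluded. So assume $x \ne p$ and set $l = \deg x < \pinfty$. Using Lemma~\ref{lem:degree-constant-upper} I would fix $N \gg l$ so large that the coordinate $u_m := \fai_{\infty,m}(x)$ satisfies $\deg u_m = l$ for every $m \ge N$; then each such $u_m$ is an interior vertex $v_{m,l,r_m}$ of the circuit $c_{m,l}$, never the base point $v_{m,0}$.

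For each large $n > N$ the construction of $y_n$ rests on one structural observation. In the walk $\fai_n(c_{n+1,l}) = e + 2c_{n,l} + 2c_{n,l+1} + \dotsb + 2c_{n,n} + e$ the interior vertex $u_n$ occurs in exactly two places, once in each of the two back-to-back copies of $c_{n,l}$, and these two occurrences are separated by exactly $l(c_{n,l})$ steps. The coordinate $u_{n+1}$ sits at one of them, so I would set $y_n := f^{\pm l(c_{n,l})}(x)$, the sign chosen so that the level-$(n+1)$ coordinate of $y_n$ is the \emph{other} occurrence. Then $\fai_{\infty,n}(x) = \fai_{\infty,n}(y_n) = u_n$, so $x, y_n \in U(u_n)$, giving the required $v = u_n$ and $i_n = \pm l(c_{n,l}) \in \Z$, while $x$ and $y_n$ already differ at level $n+1$.

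Next I would trace both orbits forward while they remain inside the single circuit $c_{n+1,l}$ at level $n+1$, which is valid on the whole relevant interval because $l(c_{n+1,l}) \gg l(c_{n,l})$. Call one of $\seb x, y_n \sen$ the \emph{first-occurrence} point and the other the \emph{second-occurrence} point. Both traverse identical vertices of $c_{n,l}$ and reach $v_{n,0}$ simultaneously; immediately afterward the second-occurrence point enters $c_{n,l+1}$ (degree $l+1$) while the first-occurrence point enters the remaining copy of $c_{n,l}$ (degree $l$). To extract separation at the \emph{fixed} coarse level $N$, I would use that $\fai_{n,N}(c_{n,l}) = e + 2c_{N,l} + \dotsb$ begins, after a prefix of length at most $n - N + 1$ of base-point steps, to run through $c_{N,l}$. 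Hence there is a time $i > 0$ (small relative to $l(c_{n,l})$) at which the first-occurrence point has level-$N$ coordinate $v_{N,l,1}$, of degree exactly $l$. At that same time the second-occurrence point is still deep inside $c_{n,l+1}$, so its level-$N$ coordinate lies in $\fai_{n,N}(c_{n,l+1}) = e + 2c_{N,l+1} + \dotsb$, whose vertices all lie in $\seb v_{N,0} \sen \cup \bigcup_{j \ge l+1} V(c_{N,j})$ and thus never in $c_{N,l} \setminus \seb v_{N,0} \sen$. Therefore the two level-$N$ coordinates differ, i.e. $\fai_{\infty,N}(f^i(x)) \ne \fai_{\infty,N}(f^i(y_n))$, as required.

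The main obstacle is not producing some separating time but pinning it to the prescribed coarse level $N$ uniformly over all large $n$. One cannot simply invoke the earlier lemma for $\deg x = \deg y$: since $y_n$ sits on the orbit of $x$, that lemma would route through the same-orbit case (Lemmas~\ref{lem:bounded-gap-on-the-same-orbit} and~\ref{lem:on-the-same-orbit-positive}) and deliver only $\limsup_{k \to \pinfty} d(f^k(x), f^k(y_n)) > 0$ at an unspecified scale. The degree bookkeeping above is precisely what forces the discrepancy to be visible already at level $N$: the first-occurrence orbit returns to $c_{N,l}$ after a bounded delay, whereas the second-occurrence orbit is committed to a long excursion through circuits of degree $\ge l+1$. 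The only genuinely computational points are the two arithmetic facts this rests on, namely that $\fai_{n,N}(c_{n,l})$ reaches $c_{N,l}$ within $n - N + 1$ steps and that both orbit points stay within $c_{n+1,l}$ on $[0,i]$.
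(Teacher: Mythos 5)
Your proposal is correct, and its first half --- discarding $p$, stabilizing the degree at some $N \gg l$, and manufacturing $y_n = f^{\pm l(c_{n,l})}(x)$ from the two occurrences of the interior vertex $u_n$ inside $\fai_n(c_{n+1,l})$ --- is exactly the paper's construction (the paper phrases it as a point $y = \pstrzinf{v}$ with $v_i = u_i$ for $i \le n$ and constant nonzero $\gap(u_i,v_i) = i_n$ for $i \ge n$, and then identifies $y_n = f^{i_n}(x)$). Where you genuinely depart is in producing the separating time at the fixed coarse level $N$. The paper argues indirectly: if $\fai_{\infty,N}(f^i(x)) = \fai_{\infty,N}(f^i(y_n))$ held for all $i > 0$, the level-$N$ itinerary of $x$ would be periodic of period $\abs{i_n}$, and this is refuted by the aperiodicity of the gap-occurrence patterns, in two cases --- Lemma~\ref{lem:usual-gap-occurrence} when $c_{N,l}$ is traced infinitely often, Lemma~\ref{lem:tail-gap-occurrence} when it is traced only finitely often. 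You instead exhibit an explicit witness: immediately after the two orbits' simultaneous visit to $v_{n,0}$ at level $n$, the first-occurrence point re-enters $c_{N,l}$ within $n-N+1$ steps, because $\fai_{n,N}(c_{n,l})$ opens with exactly $n-N$ loop edges $e$ before $2c_{N,l}$ (your displayed ``$e + 2c_{N,l} + \dotsb$'' literally carries $n-N$ copies of $e$, which your ``prefix of length at most $n-N+1$'' already accounts for), while at that same moment the second-occurrence point sits inside $\fai_{n,N}(c_{n,l+1})$, all of whose vertices lie in $\seb v_{N,0} \sen \cup \bigcup_{j \ge l+1} V(c_{N,j})$ by the remark in Notation~\ref{nota:more-than-l+1}; the two arithmetic checks this rests on ($n-N+1 \le l(c_{n,l+1})$, and both level-$(n+1)$ coordinates remaining inside $c_{n+1,l}$ up to the witness time) hold for large $n$ since the circuit lengths grow at least geometrically. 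Your route is more elementary and self-contained --- it bypasses the gap machinery of Lemmas~\ref{lem:usual-gap-occurrence} and~\ref{lem:tail-gap-occurrence} entirely and yields a concrete $i$ --- whereas the paper's route reuses lemmas already built for the scrambledness arguments and delivers the formally stronger conclusion that the level-$N$ projections differ for infinitely many $i$ (the statement needs only one). Your handling of $x = p$ also matches the paper, whose proof silently begins ``Let $x \ne p$'': as you note, the statement is actually false at the fixed point (there $y_n = p = x$), but this is harmless for refuting local equicontinuity.
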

\begin{proof}
Let $x \ne p$.
Let $\deg x = l < \pinfty$, and write $x = \pstrzinf{u}$.
As before, there exists an $N > 0$ such that $\deg u_n =l $ for all $n \ge N$.
Let $n \gg N \gg l$.
It is sufficient to show that there exists a $y \in U(u_n)$ on the orbit of $x$
 such that $\fai_{n,N}(f^i(x)) \ne \fai_{n,N}(f^i(y))$ for infinitely many $i > 0$.
Because $c_{n+1,l}$ winds around $c_{n,l}$ twice,
 there exists a $v_{n+1}$ with $u_{n+1} \ne v_{n+1} \in V(c_{n+1,l})$
 such that $\fai_{n}(v_{n+1}) = u_n$.
Then, we can construct $y = \pstrzinf{v}$ with $v_i = u_i$ $(0 \le i \le n)$
 such that $\deg y = l$ and $\gap(u_i, v_i)$ is equal to some constant $i_n \ne 0$ for all $i \ge n$.
Therefore, we have constructed a $y_n \in U(u_n)$ with $f^{i_n}(x) = y_n$.
We must consider two cases:\\
Case 1: Suppose that both $\fai_{\infty,N}(f^i(x))$ and $\fai_{\infty,N}(f^i(y))$ with $i > 0$ trace $c_{N,l}$ only finitely many times.
Then, after tracing all $c_{N,l}$'s, they only trace $c_{N,l+k}$ with $k > 0$ and
 trace $c_{N,l+1}$ infinitely many times.
By Lemma \ref{lem:tail-gap-occurrence}, these occurrences are not periodic.
Therefore, we have the desired conclusion.\\
Case 2: Suppose that both $\fai_{\infty,N}(f^i(x))$
 and $\fai_{\infty,N}(f^i(y))$ with $i > 0$ trace $c_{N,l}$ infinitely many times.
By Lemma \ref{lem:usual-gap-occurrence}, these occurrences are not periodic.
Therefore, we also obtain the conclusion.\\
This completes the proof.
\end{proof}
We have finished the proof of Theorem \ref{thm:main}.
We suggest that, in defining the sequence of graph covers,
 the  expression
 $\fai_n(c_{n+1,i}) = e_{n,0} + n_{n,i} c_{n,i}+ n_{n,i+1} c_{n,i+1}
                      + \dotsb + n_{n,n} c_{n,n} + e_{n,0}$,
 with $n_{n,i} \ge 2$ for all $n \bni$ and $1 \le i \le n$ can be used.
The above proofs may also be applicable in this case.


\begin{thebibliography}{99}
\if0
\bibitem{Akin} E.~Akin,
{\it The General Topology of Dynamical Systems $($Graduate Studies in Mathematics, Vol. 1\/$)$},
\textrm{American Mathematical Society, Providence, Rhode Island},
(1993).
\fi

\bibitem{AGW} E.~Akin, E.~Glasner and B.~Weiss,
{\it Generically there is but one self homeomorphism of the Cantor set},
\textrm{Trans.\ Amer.\ Math.\ Soc.}
\textbf{360} (2008),\ 3613--3630.

\bibitem{BH} F.~Blanchard and W.~Huang,
{\it Entropy sets, weakly mixing sets and entropy capacity},
\textrm{Discrete \&\ Contin. Dynam. Sys.}
\textbf{20} (2008),\ 275--311.



\if0
\bibitem{Boyle} M.~Boyle,
{\it Lower entropy factors of sofic systems},
\textrm{Ergod.\ Th.\ \&\ Dynam.\ Sys.}
\textbf{4} (1984),\ 541--557.
\fi

\if0
\bibitem{Conley} C.~Conley,
{\it Isolated Invariant Sets and the Morse Index},
\textrm{CBMS Reg. Conf. Ser. in Math., 
American Mathematical Society, Providence, Rhode Island},
(1978).
\fi

\bibitem{GM} J.-M.~Gambaudo and M.~Martens,
{\it Algebraic topology for minimal Cantor sets},
\textrm{Ann.\ Henri Poincare}
\textbf{7} (2006),\ 423--446.


\if0
\bibitem{GW1} E.~Glasner and B.~Weiss,
{\it Sensitive dependence on initial conditions},
\textrm{Nonlinearity}
\textbf{6} (1993),\ 1067--1075.
\fi

\if0
\bibitem{GW2} E.~Glasner and B.~Weiss,
{\it Sensitive dependence on initial conditions, Revised version},
\textrm{Unpublished}.
\fi

\bibitem{GW3} E.~Glasner and B.~Weiss,
{\it Locally equicontinuous dynamical systems, {\rm Dedicated to the memory
of Anzelm Iwanik}},
\textrm{Colloq. Math.,}
\textbf{84/85} (2000),\ 345--361.


\if0
\bibitem{Hochman} M.~Hochman,
{\it Genericity in topological dynamics},
\textrm{Ergod.\ Th.\ \&\ Dynam.\ Sys.}
\textbf{28} (2008),\ 125--165.
\fi

\bibitem{HY} W.~Huang and X.~Ye,
{\it Homeomorphisms with the whole compacta being scrambled sets},
\textrm{Ergod.\ Th.\ \&\ Dynam.\ Sys.}
\textbf{21} (2001),\ 77--91.


\if0
\bibitem{KR} A.~S.~Kechris and C.~Rosendal,
{\it Turbulence, amalgamation, and generic automorphisms of homogeneous structures},
\textrm{Proc.\ London Math.\ Soc.}
\textbf{94(3)} (2007),\ 302--350.
\fi


\bibitem{LY} T.-Y.~Li and J.~A.~Yorke,
{\it  Period three implies chaos},
\textrm{Amer.\ Math.\ Monthly}
\textbf{82}, (1975),\ 985--992.





\if0
\bibitem{LinMatui} H.~Lin and H.~Matui,
{\it Minimal dynamical systems and approximate conjugacy},
\textrm{Math.\ Ann.}
\textbf{332}, (2005),\ 795--822.
\fi


\if0
\bibitem{LindMarcus} D.~Lind and B.~Marcus,
{\it An introduction to symbolic dynamics and coding},
\textrm{Cambridge University Press,}
\textrm{Cambridge,}
(1995).
\fi

\bibitem{Mai} J.~Mai,
{\it Continuous maps with the whole space being a scrambled set},
\textrm{Chinese Science Bulletin}
\textbf{42}, (1997),\ 1603--1606.



\if0
\bibitem{Matui} H.~Matui,
{\it Approximate conjugacy and full groups of Cantor minimal systems},
\textrm{Publ.\ Res.\ Inst.\ Math.\ Sci.}
\textbf{41} (2005),\ 695--722.
\fi

\if0
\bibitem{Oxtoby} J.~C.~Oxtoby,
{\it Measure and Category, 2nd ed., $($Graduate Texts in Mathematics, 2\/$)$},
\textrm{Springer, New York,}
(1980).
\fi


\if0 
\bibitem{Shimomura0} T.~Shimomura, 
{\it The pseudo-orbit tracing property and expansiveness on the Cantor set},
\textrm{Proc.\ Amer.\ Math.\ Soc.}
\textbf{106} (1989),\ 241--244. 
\fi


\if0
\bibitem{Shimomura1} T.~Shimomura,
{\it Chain mixing endomorphisms are approximated by subshifts on the Cantor set},
\textrm{Tsukuba\ J.\ Math.}
\textbf{35} (2011),\ 67--77.
\fi

\if0
\bibitem{Shimomura2} T.~Shimomura,
{\it Aperiodic homeomorphisms approximate chain mixing endomorphisms on the Cantor set},
\textrm{Tsukuba\ J.\ Math.}
\textbf{36} (2013),\ 173--183.
\fi


\if0
\bibitem{Shimomura3} T.~Shimomura,
{\it A topological dynamical system on the Cantor set approximates its factors and its natural extension},
\textrm{Topol.\ Appl.}
\textbf{159} (2012),\ 3137--3142.
\fi

\bibitem{Shimomura4} T.~Shimomura,
{\it Special homeomorphisms and approximation for Cantor systems},
\textrm{Topol.\ Appl.}
\textbf{161} (2014),\ 178--195.






\end{thebibliography}
\end{document}